\newcommand{\bb}{\mathbb}
\newcommand{\CC}{\bb C}
\newcommand{\h}{\bb H}
\newcommand{\HH}{\bb H}
\newcommand{\Z}{\bb Z}
\newcommand{\ZZ}{\bb Z}
\newcommand{\R}{\bb R}
\newcommand{\RR}{\bb R}
\newcommand{\NN}{\bb N}
\newcommand{\Ga}{\Gamma}
\newcommand{\wrt}[1]{\mathrm{d}{#1}}
\newcommand{\supp}{\operatorname{supp}}
\newcommand{\SL}{\operatorname{SL}}
\newcommand{\PSL}{\operatorname{PSL}}
\def\ind{\mathbbm{1}}
\newtheorem{Theorem}{Theorem}
\numberwithin{Theorem}{section}
\newtheorem{coro}[Theorem]{Corollary}
\newtheorem{theo}[Theorem]{Theorem}
\newtheorem{prop}[Theorem]{Proposition}
\newtheorem{lemm}[Theorem]{Lemma}
\newcommand\reallywidehat[1]{%
\savestack{\tmpbox}{\stretchto{%
  \scaleto{%
    \scalerel*[\widthof{\ensuremath{#1}}]{\kern-.6pt\bigwedge\kern-.6pt}%
    {\rule[-\textheight/2]{1ex}{\textheight}}
  }{\textheight}%
}{0.5ex}}%
\stackon[1pt]{#1}{\tmpbox}%
}
\newtheorem*{lemma*}{Lemma}
\newtheorem*{question*}{Question}
\newtheorem*{theorem*}{Theorem}
\theoremstyle{remark}
\newtheorem{rema}[Theorem]{\sc Remark}
\newtheorem*{rema*}{\sc Remark}
\newtheorem{example}[Theorem]{\bf Example}
\numberwithin{equation}{section}
\begin{document}
\title{Shrinking target equidistribution of horocycles in cusps}
\author{Jimmy Tseng}

\address{Department of Mathematics, University of Exeter, Exeter EX4 4QF, U.K}
\email{j.tseng@exeter.ac.uk}

 \thanks{The author was supported by EPSRC grant EP/T005130/1.}

\begin{abstract}  Consider a shrinking neighborhood of a cusp of the unit tangent bundle of a noncompact hyperbolic surface of finite area, and let the neighborhood shrink into the cusp at a rate of $T^{-1}$ as $T \rightarrow \infty$.  We show that a closed horocycle whose length $\ell$ goes to infinity or even a segment of that horocycle becomes equidistributed on the shrinking neighborhood when normalized by the rate $T^{-1}$ provided that $T/\ell \rightarrow 0$ and, for any $\delta>0$, the segment remains larger than $\max\left\{T^{-1/6},\left(T/\ell\right)^{1/2}\right\}\left(T/\ell\right)^{-\delta}$.  We also have an effective result for a smaller range of rates of growth of $T$ and $\ell$.  Finally, a number-theoretic identity involving the Euler totient function follows from our technique.

\end{abstract}

\maketitle

\tableofcontents

\section {Introduction}\label{secBackground}

 Let $G:= \PSL_2(\R)$, $\Ga \subset G$ be a cofinite Fuchsian group, $\kappa_1, \cdots, \kappa_q \subset \RR \cup \{\infty\}$ be inequivalent cusps with stabilizer groups $\Gamma_1, \cdots, \Gamma_q$ where $q \in \NN$, $\h$ be the upper-half plane model of the hyperbolic plane, and $T^1\h$ be its unit tangent bundle.  We may assume, by conjugation of $\Gamma$, that $\kappa_1=\infty$ and has stabilizer group  \[\Ga_1:=\Ga_\infty := \left\{ \left( \begin{array}{cc}
1 & b \\
0 & 1 \end{array} \right) \bigg{\vert} \ b \in \Z \right\}.\]  The cusp $\kappa_1$ is called the {\em standard cusp.}  The group $G$ acts on $\h$ and $T^1\h$ as follows:  for any $\gamma:=\begin{pmatrix}a & b \\ c& d  \end{pmatrix} \in G$, we have \begin{align}\label{eqn:MobiusAct} \gamma(z) &= \left(\frac{az+b}{cz+d}\right) \quad \quad \gamma(z, \theta) = \left(\frac{az+b}{cz+d}, \theta - 2 \arg(cz+d)\right),\end{align} where $\theta$ is an angular variable measured from the upward vertical counterclockwise.  Let $z:=x+iy$.  We may identify $G$ with $T^1\h$ via the mapping $\gamma \mapsto \gamma(i, 0)$ and also identify $\Gamma \backslash G$ with $\Gamma \backslash T^1 \h$.  The Haar measure $\mu$ on $G$ (which is unique up to a multiplicative constant) is identified with the Liouville volume measure $y^{-2}\wrt x~\wrt y~\wrt \theta$ on $T^1 \h$.  Let $T^1\left(\Gamma \backslash  \h\right)$ be the unit tangent bundle of the surface $\Gamma \backslash \h$.  Then we have $T^1\left(\Gamma \backslash  \h\right)=\Gamma \backslash T^1 \h$ except at elliptic fixed points (where $\Gamma \backslash \h$ is an orbifold).

Let $\alpha < \beta$ be real numbers and consider the horocycle $\{(x+iy, 0): \alpha \leq x \leq \beta \}$.  As $y \rightarrow 0$, this horocycle {\em equidistributes,} namely  \[\frac 1{\beta -\alpha} \int_\alpha^\beta f(x+iy, 0)~\wrt x \rightarrow \frac 1{\mu(\Gamma \backslash G)} \int_{\Gamma \backslash G} f(z, \theta)~\wrt \mu\] for any $f \in C_c\left(\Gamma \backslash G\right)$.  Such equidistribution results have been studied by many mathematicians such as Zagier~\cite{Zag79}, Sarnak~\cite{Sa}, Dani and Smillie~\cite{DS84}, Hejhal~\cite{Hej00, Hej95}, Flaminio and Forni~\cite{FF03}, and Str\"ombergsson~\cite{Str13, St}.  A natural question, asked by Hejhal, is the question of what happens if $\beta-\alpha$ also shrinks.  Clearly, if $\beta-\alpha$ shrinks too quickly relative to $y$, then the horocycle can not equidistribute.  If, however, $\beta-\alpha \geq y^{C(\Gamma)-\varepsilon}$ holds, then the horocycle equidistributes uniformly, as  Hejhal~\cite{Hej00} showed for the exponent $C(\Gamma)=1/3$ and Str\"ombergsson~\cite{St} showed for the best possible exponent $C(\Gamma)=1/2$.  Some of these results are even {\em effective}, namely an error rate is computed.

In this paper, we study another natural and related question, namely the question of what happens if we take a family of test functions with supports shrinking into one of the cusps at a rate of, say, $T^{-1}$ for $T \rightarrow \infty$.  If $y$ shrinks fast enough and $\beta-\alpha$ does not shrink too fast, then we would expect the horocycle to equidistribute uniformly, provided that we divide by the rate $T^{-1}$:  \begin{align}\label{eqn:UnifNormHoroEquid}\frac T{\beta -\alpha} \int_\alpha^\beta \cdot ~\wrt x \rightarrow \frac 1{\mu(\Gamma \backslash G)} \int_{\Gamma \backslash G}\cdot ~\wrt \mu \quad \quad \textrm{ as $T \rightarrow \infty$ and $y \rightarrow 0$.}\end{align}  Let us refer to (\ref{eqn:UnifNormHoroEquid}) as {\em shrinking target horocycle equidistribution (STHE)} for such a family of test functions.  Shrinking target horocycle equidistribution is what we show in this paper (see Theorem~\ref{thm:TanBunRelEqui}) for a large collection of such test functions.  A necessary condition for STHE, for our large collection of test functions, is $Ty \rightarrow 0$.  On the other hand, if $Ty \not \rightarrow 0$, then there are cases for which STHE does not hold (even if $\beta-\alpha$ is fixed).  See Example~\ref{eg:NonEquidWhenTyLarge}. With more constraints on the various rates, we can also show an effective result (see Theorem~\ref{thm:EffectTanBunRelEqui}).  

Finally, we note that {\em shrinking target equidistribution} could more generally be formulated on a finite-volume space with cusps and with a geometric object that, under a flow, equidistributes on that space.  For horospheres on $\SL(d, \ZZ) \backslash \SL(d, \RR)$ where $d \geq 2$, shrinking target equidistribution has been very recently shown in~\cite{Tse22}.  A natural pair of parameters, the {\em critical exponent of relative rate} $c_r$ and the {\em normalizing exponent} $c_e$, are defined, and this pair $(c_r, c_e)$ is shown to be equal to $(d, d-1)$~\cite[Introduction]{Tse22}.  Our results, Theorems~\ref{thm:TanBunRelEqui} and~\ref{thm:EffectTanBunRelEqui}, agree with the results in~\cite{Tse22} for the common case where $\Gamma$ is $\PSL(2, \ZZ)$ and show, more generally, that this pair is equal to $(2,1)$ for every cusp of every cofinite Fuchsian group that we consider in this paper.  It may be interesting from a geometric point of view to understand how the pair $(c_r, c_e)$, when it is defined, behaves for cusps of other finite-volume spaces.

\subsection{Statement of results}  Our two main results, Theorems~\ref{thm:TanBunRelEqui} and~\ref{thm:EffectTanBunRelEqui}, both give shrinking target horocycle equidistribution.  Theorem~\ref{thm:EffectTanBunRelEqui} is effective while Theorem~\ref{thm:TanBunRelEqui} allows for a larger range of the rate of growth of $T$ versus decay of $y$ (or, more precisely, the decay of $Ty$).  Here $\phi_{T, \eta}(z, \theta)$ is an automorphic function on $T^1(\h)$ defined by \begin{align}\label{eqn:TestFctDefn}\phi_{T, \eta}(z, \theta) :=\phi_{T, \eta}^{(\kappa_j)}(z, \theta) := \sum_{\gamma \in \Gamma_j \backslash \Gamma} f_{T, \eta}(\sigma_j^{-1}\gamma(z, \theta)), \end{align} where $f_{T,\eta}(z, \theta)$ is a function whose support lies in $[0,1] \times [T-|\eta|, \infty) \times [0, 2\pi)$ and $\sigma_j$ is a scaling matrix.  The function $f_{T, \eta}$ is constructed using another function $h$, and both of these functions, along with $\sigma_j$, $B_0$, and $B_1$, are defined in Section~\ref{subsec:TestFcnOnCusps}. 

\begin{theo}\label{thm:TanBunRelEqui}  Let $j \in \{1, \cdots, q\}$, $\delta>0$, $T > y$, and $0<Ty<1$.  Then we have \begin{align*}\label{eqn:TanBunRelEqui}
 \frac T {\beta - \alpha} \int_\alpha^\beta \phi^{(\kappa_j)}_{T,0}(x+iy, 0)~\wrt x \rightarrow  \frac{1}{\mu(\Gamma \backslash G)} \int_0^{2\pi}\int_0^1 h(x, \theta)~\wrt x~\wrt\theta \end{align*} as $T \rightarrow \infty$ and $Ty \rightarrow 0$, uniformly for \[\beta - \alpha \geq \max\left\{T^{-1/6},\left(\frac{Ty}{B_1}\right)^{1/2}\right\}\left(\frac{Ty}{B_1}\right)^{-\delta}.\]  
 
\end{theo}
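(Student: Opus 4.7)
The plan is to deduce Theorem~\ref{thm:TanBunRelEqui} from the effective Theorem~\ref{thm:EffectTanBunRelEqui} by a mollifier-sandwich argument in the $y$-cutoff parameter~$\eta$. First, by linearity of both sides of the desired equality in $h$, I would split $h = \re(h) + i\im(h)$, handle each real part separately, and further decompose each real part as $(h+M) - M$ with $M := \|h\|_\infty + 1$, reducing to the case where $h$ is a nonnegative smooth function on $\RR/\ZZ \times \RR/(2\pi\ZZ)$.

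Second, from the construction of $\ind_{[T,\infty),\eta}$ I would record the pointwise sandwich
\[\ind_{[T,\infty),\eta}(y) \leq \ind_{[T,\infty)}(y) \leq \ind_{[T,\infty),-\eta}(y) \qquad \textrm{for every } \eta>0 \textrm{ and } y > 0.\]
(For $\eta>0$ the smooth cutoff is supported in $[T,\infty)$ and bounded by $1$, while for $\eta<0$ it already equals $1$ on $[T,\infty)$.) Multiplying by $h \geq 0$ and summing over $\gamma \in \Gamma_j \backslash \Gamma$ transfers the sandwich to the automorphic functions, so after integrating along the segment and normalizing by $T/(\beta-\alpha)$, the quantity of interest $\frac{T}{\beta-\alpha}\int_\alpha^\beta \phi_{T,0}^{(\kappa_j)}(x+iy,0)\,\wrt x$ is squeezed between the corresponding expressions for $\phi_{T,\eta}^{(\kappa_j)}$ and $\phi_{T,-\eta}^{(\kappa_j)}$.

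Third, I would apply Theorem~\ref{thm:EffectTanBunRelEqui} to each endpoint of the sandwich. Each application produces the nominal main term
\[\frac{1}{\mu(\Gamma\backslash G)}\left(\int_0^{2\pi}\int_0^1 h(x,\theta)\,\wrt x\,\wrt\theta\right) \cdot T\int_0^\infty \ind_{[T,\infty),\pm\eta}(y)\,y^{-2}\,\wrt y,\]
plus an effective error $E_{\pm\eta}(T,y,\beta-\alpha)$ which deteriorates as $\eta \to 0$ (since derivative bounds on $\ind_{[T,\infty),\eta}$ blow up). A direct computation yields $T\int_0^\infty \ind_{[T,\infty),\pm\eta}(y)\,y^{-2}\,\wrt y = 1 + O(\eta/T)$, so both nominal main terms agree with the target main term of Theorem~\ref{thm:TanBunRelEqui} up to a discrepancy of order $\eta\,\|h\|_\infty/T$, which is harmless.

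The main obstacle is the balancing act: I must let $\eta \to 0$ fast enough that the mollification discrepancy vanishes, yet slowly enough that the effective error $E_{\pm\eta}$ still tends to zero as $T\to\infty$, $Ty\to 0$, uniformly over the prescribed range $\beta-\alpha \geq \max\{T^{-1/6},(Ty/B_1)^{1/2}\}(Ty/B_1)^{-\delta}$. Since Theorem~\ref{thm:TanBunRelEqui} demands only qualitative convergence, I have the freedom to choose $\eta = \eta(T,y,\beta-\alpha)$ tending to zero arbitrarily slowly; this flexibility is precisely what permits a wider parameter range than in the effective Theorem~\ref{thm:EffectTanBunRelEqui}. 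The remaining work is to extract the explicit $\eta$-dependence from Theorem~\ref{thm:EffectTanBunRelEqui} and verify that the optimal $\eta$ still yields $E_{\pm\eta} \to 0$ throughout the stated regime.
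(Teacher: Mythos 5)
Your sandwich $\ind_{[T,\infty),\eta}\le\ind_{[T,\infty)}\le\ind_{[T,\infty),-\eta}$ (for $\eta>0$) is correct, and the logic of squeezing $\phi_{T,0}$ between $\phi_{T,\eta}$ and $\phi_{T,-\eta}$ for $h\ge 0$ is sound. However, the plan to deduce Theorem~\ref{thm:TanBunRelEqui} from Theorem~\ref{thm:EffectTanBunRelEqui} cannot work in the stated parameter range, and the source of the obstruction is identified incorrectly. You locate the difficulty in balancing ``$\eta\to 0$ fast enough'' against ``effective error small'' and suggest that freedom in $\eta$ is what unlocks the wider range. But the binding constraint is not $\eta$ at all: the error term $E$ in Theorem~\ref{thm:EffectTanBunRelEqui} carries a factor $T^4|\eta|^{-4}$, and the $T^4$ is independent of $\eta$. (It comes from the Sobolev norm of the renormalized test function $\phi^0_{\widetilde\eta}$, whose transition region in $y$ has width of order $|\eta|B_1/T$ after the substitution $y\mapsto (T/B_1)y$, so its $k$-th derivative scales like $(T/|\eta|)^k$.) Taking $\eta$ as large as allowed, i.e.\ $|\eta|=\min\bigl((B_1-B_0)/2,\tfrac14\bigr)$, minimizes the $|\eta|^{-4}$ but leaves $T^4$ intact.

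Now plug in the extreme of the range claimed in Theorem~\ref{thm:TanBunRelEqui}, say $\beta-\alpha\asymp (Ty/B_1)^{1/2-\delta}$ with $Ty\to0$ slowly relative to $T$ (e.g.\ $Ty\sim 1/\log T$). Then $Ty/(\beta-\alpha)^2\asymp(Ty/B_1)^{2\delta}$, so the leading term of $E$ behaves like $T^4(Ty/B_1)^{\delta}\log^2(\cdot)$, which diverges. This is precisely why Theorem~\ref{thm:EffectTanBunRelEqui} imposes the stricter constraint $\beta-\alpha\ge T^4(Ty/B_1)^{1/2}(Ty/B_1)^{-\delta}$, i.e.\ with an extra factor of $T^4$. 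Your sandwich argument cannot shed that factor; at best it reproves Theorem~\ref{thm:TanBunRelEqui} in the smaller range of Theorem~\ref{thm:EffectTanBunRelEqui}, which the paper already notes is a strictly weaker statement. To reach the full range, the paper mollifies in the $x$-direction (approximating $\chi_{[\alpha,\beta]}$ by $\chi^{\pm}$), controls the high Fourier modes via Jackson's theorem, renormalizes via Proposition~\ref{prop:MovingToFixed2}, and then appeals to the \emph{non-effective} equidistribution of Str\"ombergsson (Lemma~\ref{lemm:HoroDistTanBun}). The non-effective lemma is an $\epsilon$--$\delta$ limit statement and thus places no quantitative Sobolev-norm burden on the test function; this is exactly what removes the $T^4$ obstruction and yields the larger range. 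A repair to your approach would require replacing Theorem~\ref{thm:EffectTanBunRelEqui} by a version built on the non-effective Lemma~\ref{lemm:HoroDistTanBun} rather than the effective Lemma~\ref{lemm:EffectHoroDistTanBun}, at which point it essentially becomes the paper's proof.
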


\begin{theo}\label{thm:EffectTanBunRelEqui}  Let $0\leq \alpha < \beta\leq 1$, $j \in \{1, \cdots, q\}$, $\frac 1 2 \geq \delta>0$, $|\eta| \leq \min\left(\frac{B_1-B_0}{2}, \frac 1 4 \right)$, $T -\min\left(\frac{B_1-B_0}{2}, \frac 1 4 \right)  >y$, and $0<Ty<\frac 3 4$.  Then, for $\eta \neq 0$, we have \begin{align*}
 \frac T {\beta - \alpha} \int_\alpha^\beta & \phi^{(\kappa_j)}_{T,\eta}(x+iy, 0)~\wrt x \\ &= \begin{cases} \left\langle \phi^{(\kappa_j)}_{T,\eta}\right\rangle+E(\alpha, \beta, T, y, B_1, s_1, s'_1,\eta) + O\left(\frac{Ty}{B_1}\right)^{\delta/2}  & \textrm{ if } \alpha \neq 0 \textrm { or } \beta \neq 1 \\
\left\langle \phi^{(\kappa_j)}_{T,\eta}\right\rangle+E(\alpha, \beta, T, y, B_1, s_1, s'_1,\eta) + O\left(\left(\frac{Ty}{B_1}\right)^{\delta}+\frac 1 {\sqrt{T}}\right)  & \textrm{ if } \alpha =0 \textrm { and } \beta = 1 \end{cases}\end{align*} and, for $\eta =0$ and any $0<|\widetilde{\eta}| \leq \min\left(\frac{B_1-B_0}{2}, \frac 1 4 \right)$, we have   
\begin{align*}
 \frac T {\beta - \alpha} \int_\alpha^\beta & \phi^{(\kappa_j)}_{T,0}(x+iy, 0)~\wrt x \\ &= \begin{cases}  \left\langle \phi^{(\kappa_j)}_{T,0}\right\rangle+E(\alpha, \beta, T, y, B_1, s_1, s'_1,\widetilde{\eta}) + O\left(\left(\frac{Ty}{B_1}\right)^{\delta/2}+ \frac 1 T\right)  & \textrm{ if } \alpha \neq 0 \textrm { or } \beta \neq 1 \\
 \left\langle \phi^{(\kappa_j)}_{T,0}\right\rangle+E(\alpha, \beta, T, y, B_1, s_1, s'_1,\widetilde{\eta}) + O\left(\left(\frac{Ty}{B_1}\right)^{\delta}+\frac 1 {\sqrt{T}}\right)  & \textrm{ if } \alpha =0 \textrm { and } \beta = 1 \end{cases}\end{align*} as $T \rightarrow \infty$ and $Ty \rightarrow 0$, uniformly for \[1 \geq \beta -\alpha \geq \max\left(T^4 \left(\frac{Ty}{B_1}\right)^{1/2}, T^{-1/6}\right)\left(\frac{Ty}{B_1}\right)^{-\delta}.\]

Here, \begin{align*} &\left\langle \phi^{(\kappa_j)}_{T,\eta}\right\rangle := \frac {T}{\mu(\Gamma \backslash G)} \int_0^{2\pi} \int_0^1 \int_0^\infty \ind_{[T, \infty), \eta}(y) h(x, \theta) \frac{\wrt y \wrt x \wrt \theta}{y^2}, \\
&\left\langle \phi^{(\kappa_j)}_{T,0}\right\rangle := \frac{1}{ \mu(\Gamma\backslash G)} \int_0^{2\pi} \int_0^1  h(x, \theta)~\wrt x~\wrt \theta, \\
&E(\alpha, \beta, T, y, B_1, s_1, s'_1,\cdot):=O\left(T^4 |\cdot|^{-4}\right) \\ & \quad \times \left( \left(\frac{Ty}{(\beta-\alpha)^2B_1}\right)^{1/2} \log^2\left(\frac{(\beta-\alpha)B_1}{Ty}\right) + \left(\frac{Ty}{(\beta-\alpha)^2B_1}\right)^{1-s'_1} + \left(\frac{Ty}{(\beta-\alpha)B_1}\right)^{1-s_1}\right),
  \end{align*} $s_1$, $s'_1$ are as in (\ref{eqn:SmallEigenvalues}), and the implied constants (including the one coming from $E$) depend on $\Gamma$, $\kappa_1$, $\kappa_j$, and $h$.
\end{theo}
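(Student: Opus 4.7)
The plan is to treat $\phi^{(\kappa_j)}_{T,\eta}$ as an incomplete Eisenstein series attached to the cusp $\kappa_j$ with cut-off profile $f_{T,\eta}$, and then spectrally decompose along $L^2(\Gamma \backslash G)$ to isolate the main term $\langle \phi^{(\kappa_j)}_{T,\eta}\rangle$ and bound the remainder by applying the horocycle integral to each spectral piece. To begin, I would Fourier expand $h(x,\theta)$ in the circle variable $\theta$, so that $\phi^{(\kappa_j)}_{T,\eta}$ splits as a sum over weights; smoothness of $h$ and the control on $h_2$ give rapid decay in the frequency, so it suffices to bound each weight-$k$ component uniformly in $k$ (up to polynomial loss absorbed into the implied constants).

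For each weight-$k$ component I would decompose the incomplete Eisenstein series on $\Gamma \backslash \HH$ into: (i) the residual contribution, whose constant term integrated in $x$ produces $\langle \phi^{(\kappa_j)}_{T,\eta}\rangle$ plus residues at the small eigenvalues labelled by $s_1, s'_1$; (ii) the cuspidal piece $\sum_n \langle \phi^{(\kappa_j)}_{T,\eta}, \varphi_n\rangle \varphi_n$; and (iii) the continuous spectrum $\sum_i \int_\RR \langle \phi^{(\kappa_j)}_{T,\eta}, E_i(\cdot, \tfrac12 + it)\rangle E_i(\cdot, \tfrac12+it)\, \wrt t$. The inner products on the right are computed by unfolding against $f_{T,\eta}$ and reduce to one-dimensional Mellin-type integrals $\int_0^\infty \ind_{[T,\infty),\eta}(y)\, y^{s-3/2}\, \wrt y$ weighted by the zero-th Fourier coefficient at $\kappa_j$ of $\varphi_n$ or $E_i$; integrating by parts twice against the mollifier $\rho_{|\eta|/2}$ is what produces the factor $T^4 |\eta|^{-4}$ inside $E$, while the residues of the Eisenstein series at $s_1, s'_1$ generate the last two summands of $E$.

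For each remaining piece, I would then expand in its Fourier expansion at the standard cusp $\kappa_1 = \infty$,
\[
\varphi_n(x+iy) = \sum_{m \neq 0} a_n(m)\sqrt{y}\, K_{s_n - 1/2}(2\pi |m| y)\, e^{2\pi i m x},
\]
(and similarly for $E_i$), so that $\int_\alpha^\beta (\cdots)\, \wrt x$ becomes a sum over $m$ of $a_n(m)\sqrt y\, K_{s_n - 1/2}(2\pi|m|y)$ times $\int_\alpha^\beta e^{2\pi i m x}\, \wrt x = O(\min(\beta - \alpha, |m|^{-1}))$. The bound then uses three ingredients: exponential decay of $K_{s-1/2}(2\pi |m| y)$ for $|m| y \gg 1$ to truncate the sum at $|m| \ll 1/y$; a Rankin--Selberg / Cauchy--Schwarz mean-value bound on $\sum_{|m| \leq M}|a_n(m)|^2$ (and its analogue on the critical line for $E_i$) to convert to the spectral norm; and the oscillation estimate on $\int_\alpha^\beta e^{2\pi i m x}\, \wrt x$, which after optimizing in the truncation parameter yields the square-root gain $(Ty/((\beta-\alpha)^2 B_1))^{1/2}$ (with a logarithmic loss) that dominates $E$. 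The asymmetric exponents $1-s_1$ and $1-s'_1$ are exactly what appears when the Bessel integral is evaluated against the leading $y^{1-s}$ asymptotics of the exceptional Eisenstein residues.

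Finally, the $\eta = 0$ case follows from the $\eta \neq 0$ case by sandwiching $\ind_{[T,\infty)}$ between $\ind_{[T,\infty),\widetilde{\eta}}$ and $\ind_{[T,\infty),-\widetilde{\eta}}$; the two approximants agree with $\ind_{[T,\infty)}$ off a $y$-set of Lebesgue measure $|\widetilde\eta|$, and after normalization by $T$ this is what contributes the extra $O(1/T)$. The special $\alpha=0,\beta=1$ case is cleaner because $\int_0^1 e^{2\pi i m x}\, \wrt x = 0$ for $m \neq 0$, so only the zero-th Fourier coefficient at $\kappa_1$ survives; for cuspidal $\varphi_n$ this zero-th coefficient vanishes, and for Eisenstein series one is left with the $y^{1/2 \pm it}$ integrated against $\ind_{[T,\infty),\eta}$, which after a contour shift produces the $1/\sqrt{T}$ error. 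The main obstacle I anticipate is the uniformity of the spectral bounds in the joint regime $T \to \infty$, $Ty \to 0$: one must carefully balance the Bessel truncation at $|m| \sim 1/y$ against the oscillation gain from the short horocycle segment $[\alpha,\beta]$ to recover the stated threshold $\max\{T^{-1/6}, (Ty/B_1)^{1/2}\}(Ty/B_1)^{-\delta}$, and must absorb the $T^4 |\eta|^{-4}$ prefactor of $E$ by the (polynomially) stronger decay coming from the smoothness of $h$ together with the extra integration by parts permitted by the mollifier.
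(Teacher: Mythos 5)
Your proposal takes a genuinely different route from the paper. The paper works on the geometric side: it uses the double coset (Kloosterman) expansion of Lemma~\ref{lemm:DoubleCosetIntegral}, then a stationary-phase analysis (Proposition~\ref{prop:MovingToFixed}) to show the non-constant $x$-Fourier modes of $h$ contribute only $O(T^{-3/2})$, then a renormalization step (Proposition~\ref{prop:MovingToFixed2}) that replaces the shrinking cutoff $\ind_{[T,\infty),\eta}$ at height $y$ by the \emph{fixed} cutoff at height $B_1$ at the dilated height $\frac{T}{B_1}y$, so that Str\"ombergsson's effective equidistribution theorem~\cite[Proposition~3.1]{Str13} (black-boxed as Lemma~\ref{lemm:EffectHoroDistTanBun}, with the $T^4|\eta|^{-4}$ arising from the $W_4$-Sobolev norm of the renormalized test function) can be applied to a single fixed target; the segment $[\alpha,\beta]$ is handled by mollified cutoffs $\chi^\pm$ plus Jackson's theorem for the Fourier tail. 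You instead propose to work directly on the spectral side, decomposing the shrinking incomplete Eisenstein series into residual, cuspidal, and continuous pieces, unfolding the inner products, and then computing the horocycle integral of each spectral component via its Fourier--Bessel expansion at $\kappa_1$ combined with Rankin--Selberg mean-value bounds and the oscillation estimate $\int_\alpha^\beta e(mx)\,\wrt x = O(\min(\beta-\alpha,|m|^{-1}))$. This is in the classical Zagier--Sarnak--Hejhal tradition and is plausible, but it amounts to re-proving an effective equidistribution theorem from scratch \emph{for shrinking targets}, tracking the $T$-dependence through the Mellin integrals of $\ind_{[T,\infty),\eta}$ against each spectral constant term; the paper's renormalization neatly sidesteps exactly this by reducing to a fixed target where Str\"ombergsson's bound is already available. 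Your approach would avoid the stationary-phase machinery entirely (the $x$-dependence of $h$ is absorbed into the unfolded inner products), at the cost of redoing the spectral bookkeeping that Str\"ombergsson has already done. One small inaccuracy: the factor $T^4|\eta|^{-4}$ reflects a fourth-order Sobolev norm (equivalently four integrations by parts against the mollifier), not two, matching the $W_4$-norm required by~\cite[Proposition~3.1]{Str13}.
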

\begin{rema}
 Note that $Ty \rightarrow 0$ is necessary (in both Theorems~\ref{thm:TanBunRelEqui} and~\ref{thm:EffectTanBunRelEqui}); see Example~\ref{eg:NonEquidWhenTyLarge}.  \end{rema}

\subsection{Outline of the main proofs}  The basic idea is to eliminate the growth of $T$ by renormalizing in the $y$-coordinate to a fixed value $B_1$ (see Section~\ref{sec:RenormShrinkNeigh}) and to apply the usual horocycle equidistribution (see Section~\ref{sec:HorocycleEquiFixedFunc}).  The renormalization is done using the double coset decomposition from Lemma~\ref{lemm:DoubleCosetIntegral}.  To illustrate the renormalization in the simplest case of the closed horocycle (namely for $\alpha=0$ and $\beta=1$), we apply Lemma~\ref{lemm:DoubleCosetIntegral} to $\int_0^1\phi^{(\kappa_j)}_{T,0}(x+iy,0)~\wrt x$ and argue as in the proof of Proposition~\ref{prop:MovingToFixed} to obtain \begin{align*} y  \sum \int_{-\sqrt{B}}^{\sqrt{B}} f_{T,0}\left(\frac a c - \frac 1{c^2 y} \frac 1 {x+i}, - 2\arg(x+i) \right)~\wrt x,
  \end{align*} where the sum is over the double cosets, $B$ is as defined in the statement of Proposition~\ref{prop:MovingToFixed}, and $f_{T,0}$ (which involves the function $h$) is as defined in (\ref{eqnDefnFT0}).  Similarly, for $\int_0^1\phi^{0, (\kappa_j)}_{0}\left(x+i\frac{T}{B_1}y, 0\right)~\wrt x$, we have \begin{align*} \frac{T}{B_1}y  \sum \int_{-\sqrt{B}}^{\sqrt{B}} f^0_0\left(\frac a c - \frac {B_1}{c^2 Ty} \frac 1 {x+i}, - 2\arg(x+i) \right)~\wrt x
  \end{align*} where $f^0_0$ is defined in (\ref{eqn:defnAuxilFunctions}).  Note that, while the support of $\phi^{(\kappa_j)}_{T,0}$ shrinks into cusp $\kappa_j$ as $T \rightarrow \infty$, the support of $\phi^{0, (\kappa_j)}_{0}$ is fixed, and it is to these automorphic functions of fixed support that we apply the usual horocycle equidistribution.  Comparing term-by-term allows us to obtain the estimate \[\left| \frac{T}{B_1}\int_0^1\phi^{(\kappa_j)}_{T,0}(x+iy,0)~\wrt x -  \int_0^1\phi^{0, (\kappa_j)}_{0}\left(x+i\frac{T}{B_1}y, 0\right)~\wrt x\right|\] under suitable conditions, and these and related estimates give STHE.  The key part of the term-by-term comparison (for closed horocycles and in general) is estimating the integrals $\int_{-\sqrt{B}}^{\sqrt{B}} \cdot~\wrt x$, which we accomplish via the method of stationary phase.

Two significant difficulties arise, both involving the $x$-coordinate.  Dealing with general $h(x, \cdot)$, even just for closed horocycles, is a difficulty, which the method of stationary phase allows us to handle via the Fourier expansion of $h(x, \cdot)$.  We show that the non-constant Fourier modes are negligible, and, thus, can replace  $h(x, \cdot)$ with its average.  See Section~\ref{sec:ApplicationMethStatPhase}.  Dealing with general $\beta - \alpha$ is another difficulty, which we handle by taking the Fourier expansion of a suitable smoothing and applying a theorem of Jackson to handle the high Fourier modes.  See Sections~\ref{sec:ProofthmTanBunRelEqui} and~\ref{sec:ProofthmEffectTanBunRelEqui} in which the proofs of Theorems~\ref{thm:TanBunRelEqui} and~\ref{thm:EffectTanBunRelEqui}, respectively, are given.

\subsection*{Acknowledgements}  I wish to thank Jens Marklof for helpful discussions and the referees for their helpful comments.  One of the referees pointed me to~\cite[Theorem~4]{Goo83} for which I am grateful.

For the purpose of open access, the author has applied a CC BY public copyright licence (where permitted by UKRI, ‘Open Government Licence’ or ‘CC BY-ND public copyright licence’ may be stated instead) to any Author Accepted Manuscript version arising.  This study did not generate any new data.

\section{Functions and the double coset decomposition}  In this section, we define our test and auxiliary functions and develop a useful expansion for both of these via the double coset decomposition.

\subsection{Test and auxiliary functions}\label{subsec:TestFcnOnCusps} First, let us describe the space that contains the support of the test and auxiliary functions.  To each cusp $\kappa_j$, there is an element $\sigma_j \in G$ (called a {\em scaling matrix}) such that $\sigma_j \infty = \kappa_j$ and $\sigma_j^{-1} \Gamma_j \sigma_j = \Gamma_\infty$.  We require that $\sigma_1$ be the identity element of $G$.  Let $F \subset [0,1] \times (0, \infty) \subset \h$ be a canonical fundamental domain for the action of $\Gamma$ on $\h$ and $\overline{F}$ be its topological closure (in the Riemann sphere).  By modifying $\sigma_j$ for $j \in \{2, \cdots, q\}$, we can ensure that \begin{align}\label{eqn:FullCuspInY}
 \sigma^{-1}_j(\overline{F}) \cap \{z \in \h : y \geq B\} = [0,1] \times [B, \infty) \end{align} holds for all $j \in \{1, \cdots, q\}$ and for all $B \geq B_0>1$ (see~\cite[(2.2)]{St} or~\cite[Page 268]{He}).  Here $B_0$ is a fixed constant depending on $\Ga$.  Note that $\overline{F} \times  [0, 2\pi)$ is a fundamental domain for the action of $\Gamma$ on $T^1\h$.

For any cusp $\kappa_j$, we will define our test functions in two steps.  The first step is to construct suitable functions with support in $[0,1] \times [B_1, \infty) \times [0, 2\pi)$.  For a set $S \subset (0, \infty)$, define its indictor function $\ind_S:  (0, \infty) \rightarrow [0,1]$ by \[\ind_S(y) := \begin{cases} 
      1 & y \in S\\
      0 & y \in (0,\infty) \backslash S \end{cases}.\]  Let $h: \RR / \ZZ \times  \RR / (2 \pi \ZZ) \rightarrow \CC$ be a $C^\infty$- function and $h_2(x, \theta) := \frac{\partial^2 h(x, \theta)}{\partial x^2}$.  Choose a fixed constant $B_1 >B_0$.  Let $T \geq B_1$ and $\eta$ be a real number for which $\min\left(\frac{B_1-B_0}{2}, \frac 1 4 \right) \geq |\eta|> 0$ holds.  Then define \begin{align}\label{eqnDefnFT0}
f_{T, 0}:= \ind_{[T, \infty)}(y) h(x, \theta), \end{align} which is our main interest in this paper.  But, in order to give an effective result (see Theorem~\ref{thm:EffectTanBunRelEqui}), we will use (and also give results for) approximations by the smooth functions \[f_{T, \eta}: = \ind_{[T, \infty), \eta}(y) h(x, \theta).\]  Here, $\ind_{[T, \infty), \eta}:  (0, \infty) \rightarrow [0,1]$ is an approximation to $\ind_{[T, \infty),0}(y):=\ind_{[T, \infty)}(y)$ defined by the $C^\infty$-function $\ind_{[T, \infty), \eta}(y) :=
      \ind_{[-T+\eta/2, \infty)} * \rho_{|\eta|/2}(y-2T)$ where $*$ denotes convolution and $\rho \in C^\infty_c(\RR)$ is the well-known mollifier defined in (\ref{eqn:MolliferRhoDefn}).  Note that $0\leq \ind_{[T, \infty), \eta} \leq 1$ and \[\supp(\ind_{[T, \infty), \eta}) \subset \begin{cases}  [T - |\eta|, \infty) & \textrm{ if } \eta <0 \\  [T, \infty) & \textrm{ if } \eta >0\end{cases}.\]

The second step is to use (\ref{eqn:FullCuspInY})  to construct, for each $f_{T, \eta}$, the related automorphic function on $T^1(\h)$ denoted by $\phi_{T, \eta}^{(\kappa_j)}(z, \theta)$ and defined in (\ref{eqn:TestFctDefn}).  Here, $\eta$ can also be zero.  Note that, as $f_{T, \eta}$ is supported in the unit tangent bundle of the standard cusp, all but at most one term in the automorphic function $\phi_{T, \eta}^{(\kappa_j)}$ is zero.  Of course, the functions $\phi_{T, \eta}^{(\kappa_j)}$ themselves are supported in the unit tangent bundle of the cusp $\kappa_j$.  The collection of $\phi_{T, \eta}^{(\kappa_j)}$ is our collection of test functions.

Finally, the auxiliary functions, which we will use to study our test functions, are defined as follows.  Let $\mathfrak{m}_w(y) := wy$ for $w \in \CC$.  For $|\eta|\leq \min\left(\frac{B_1-B_0}{2}, \frac 1 4 \right)$, consider \begin{align}\label{eqn:defnAuxilFunctions}  g_{T, \eta}(z, \theta) &:=  \ind_{[T, \infty), \eta}(y) \int_0^1 h(t, \theta)~\wrt t \quad \quad \varphi_{T, \eta}(z, \theta) := \varphi^{(\kappa_j)}_{T, \eta}(z, \theta) :=  \sum_{\gamma \in \Gamma_j \backslash \Gamma} g_{T, \eta}(\sigma_j^{-1}\gamma(z, \theta)), \\\nonumber f_\eta^0(z, \theta) &:=  \ind_{[T, \infty), \eta} \circ \mathfrak{m}_{\frac T {B_1}}(y) \int_0^1 h(t, \theta)~\wrt t \quad \quad  \phi^{0}_\eta(z, \theta) := \phi^{0,(\kappa_j)}_\eta(z, \theta) := \sum_{\gamma \in \Gamma_j \backslash \Gamma} f_\eta^0(\sigma_j^{-1}\gamma(z, \theta)).\end{align}  Note that, since $|\eta|$ is small (or zero), the support of $f_\eta^0$ in the $y$-variable is approximately (or exactly, respectively) $[B_1, \infty)$.

\subsection{The double coset decomposition}  Now consider a cuspidal function in the unit tangent bundle of the standard cusp $f: T^1(\Gamma_\infty \backslash \h) \rightarrow \CC$ such that the support of $f(z,\theta)$ is $[B_0, \infty)$ in the $y$-variable.  We note that $f$ is periodic:  $f(z+1, \theta)=f(z,\theta)=f(z, \theta+2\pi)$.  We can now define a family of automorphic functions on $T^1(\h)$ by \[\phi^{(\kappa_j)}(z, \theta) := \sum_{\gamma \in \Gamma_j \backslash \Gamma} f(\sigma_j^{-1}\gamma(z, \theta)) = \sum_{\gamma \in \Gamma_\infty \backslash \sigma_j^{-1}\Gamma} f(\gamma(z, \theta))\] for each $j \in \{1, \cdots, q\}$.  The double coset decomposition (see~\cite[Section~2.4]{Iwa02} for example) allows us, furthermore, to write \begin{align}\label{eqn:DoubleCosetDecomp}\phi^{(\kappa_j)}(z, \theta) = \delta_{1j} \ f(z, \theta) + \sum_{n \in \ZZ} \  \sum_{\substack{\gamma \in \Gamma_\infty \backslash \sigma_j^{-1}\Gamma/\Gamma_\infty \\ \gamma \neq \omega_\Gamma}} f(\gamma (z+n, \theta))\end{align} where \[\omega_\Gamma:=\Gamma_\infty \begin{pmatrix}  1 & 0 \\ 0 & 1 \end{pmatrix}\Gamma_\infty, \quad \quad \delta_{1j}:=\begin{cases} 1 &\textrm{ if } j=1 \\ 0 &\textrm{ if } j \in \{2, \cdots, q\} \end{cases}.\]  Note that $\gamma = \omega_\Gamma$ can only occur in the case $j=1$.

The following expansion, which applies to both the test and auxiliary functions, is the key setting of our proofs.  For conciseness of notation, we use $e(w) := e^{2 \pi i w}$ for any $w \in \CC$.  
 
\begin{lemm}\label{lemm:DoubleCosetIntegral}  Let $m \in \ZZ$.  We have \begin{align*}
 \int_0^1 \phi^{(\kappa_j)}&(x+iy, 0) \ e(m x)~\wrt x =  \delta_{1j}  \int_0^1  f(x+iy, 0) \ e(m x)~\wrt x   \\ 
 &+  y  \sum_{\substack{\Gamma_\infty \begin{pmatrix}  a & b \\ c & d \end{pmatrix} \Gamma_\infty \in \Gamma_\infty \backslash \sigma_j^{-1}\Gamma/\Gamma_\infty \\  \quad c>0}} \int_{-\infty}^\infty f\left(\frac a c - \frac 1{c^2 y} \frac 1 {x+i}, - 2\arg(x+i) \right) e\left(m xy- m \frac d c\right)~\wrt x.\end{align*}

\end{lemm}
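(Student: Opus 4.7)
The starting point is to substitute the double coset expansion \eqref{eqn:DoubleCosetDecomp} for $\phi^{(\kappa_j)}$ into $\int_0^1 \phi^{(\kappa_j)}(x+iy,0)\,e(mx)\,\wrt x$. The $\delta_{1j} f$ contribution reproduces the first right-hand side term verbatim. For the inner sum, I would interchange $\sum_{n \in \ZZ}$ with $\int_0^1$ and exploit the identity $e(m(x+n)) = e(mx)$, which is valid because $m \in \ZZ$, to ``unfold'' the pair $\bigl(\sum_{n \in \ZZ}, \int_0^1\bigr)$ into a single integral $\int_{-\infty}^\infty$ for each double coset representative $\gamma \neq \omega_\Gamma$.

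For such a representative $\gamma = \begin{pmatrix} a & b \\ c & d\end{pmatrix}$ I would normalize $c > 0$ by identifying $\gamma$ with $-\gamma$ in $\PSL_2(\RR)$; this is possible because $\omega_\Gamma$ is the unique $c = 0$ double coset, and it only appears when $j = 1$ (indeed, a $c=0$ element $\gamma \in \sigma_j^{-1}\Gamma$ fixes $\infty$, which forces the underlying $\Gamma$-element to send $\infty$ to $\kappa_j$, possible only for $j=1$ by inequivalence of cusps). Having reduced to $c > 0$, I would apply \eqref{eqn:MobiusAct} together with the identity $\frac{az+b}{cz+d} = \frac{a}{c} - \frac{1}{c(cz+d)}$ and perform the change of variables $x = yu - \tfrac{d}{c}$, equivalently $cx + d = cyu$, so that $\wrt x = y\,\wrt u$ and $c(x+iy) + d = cy(u+i)$. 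This substitution extracts the overall factor $y$, turns $\frac{1}{c(cz+d)}$ into $\frac{1}{c^2 y(u+i)}$, turns $-2\arg(c(x+iy)+d)$ into $-2\arg(u+i)$ (using $cy > 0$), and turns the phase $e(mx)$ into $e(myu)\,e(-md/c)$. Renaming $u$ back to $x$ yields precisely the integrand displayed in the lemma.

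The main technical point --- rather than a conceptual obstacle --- is justifying the various interchanges of sum and integral. Because $f$ is supported in $y \geq B_0 > 0$, the inequality $\im(\gamma(x+n+iy)) = y\lvert c(x+n)+d+icy\rvert^{-2} \geq B_0$ restricts $(c,\, d+cn)$ to a finite set once $z$ and $y$ are fixed; hence for each double coset only finitely many $n$ contribute, and only finitely many double cosets contribute in total. After the substitution, the $u$-support on $\RR$ is similarly compact (it amounts to $u^2 + 1 \leq \frac{1}{c^2 y B_0}$), so the $\RR$-integrals are over bounded sets and Fubini applies throughout. The lemma then reduces to a direct bookkeeping calculation combining the standard unfolding trick with the $\PSL_2(\RR)$-action on $T^1\h$.
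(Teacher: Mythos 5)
Your proposal is correct and follows essentially the same route as the paper: substitute the double coset expansion, unfold $\sum_n \int_0^1$ into $\int_{\RR}$ using $e(m(x+n))=e(mx)$, and change variables to extract the factor $y$ and land on the stated integrand (the paper does the substitution in two steps, $x+d/c\mapsto x$ then $x/y\mapsto x$, which is exactly your single substitution $x=yu-d/c$). The paper omits the Fubini/finiteness justification, which you supply correctly via the $y\geq B_0$ support constraint, but this is additional rigor rather than a different argument.
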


\begin{proof}
 In the double coset decomposition, the condition $\Gamma_\infty \begin{pmatrix}  a & b \\ c & d \end{pmatrix} \Gamma_\infty \neq \omega_{\Gamma}$ is equivalent to the condition $c>0$.  Summing over $n$, we have \begin{align*}
 &\int_{-\infty}^\infty f\left(\frac{a(x+iy)+b}{c(x+iy)+d)}, -2\arg(cx+d+icy) \right) e(mx)~\wrt x  \\ & =   \int_{-\infty}^\infty f\left(\frac a c - \frac 1{c^2 } \frac 1 {x+iy}, - 2\arg(x+iy) \right) e\left(mx - m \frac d c\right)~\wrt x, \end{align*}where we have changed variables $x+d/c \mapsto x$ and simplified.  To obtain the desired result, we again change variables $x/y \mapsto x$.\end{proof}

\section{An application of the method of stationary phase}\label{sec:ApplicationMethStatPhase}  In this section, we state Proposition~\ref{prop:MovingToFixed}, its corollary (Corollary~\ref{coro:MovingToFixed}), and a generalization of the corollary (Theorem~\ref{thm:EscNeighDepOnConst}) and give Example~\ref{eg:NonEquidWhenTyLarge}.  We  prove Proposition~\ref{prop:MovingToFixed} and Corollary~\ref{coro:MovingToFixed}, leaving the proof of Theorem~\ref{thm:EscNeighDepOnConst} to Section~\ref{sec:EscNeighDepOnConst} .  Proposition~\ref{prop:MovingToFixed} is the most important tool in this paper.  It comes from an application of the method of stationary phase to the relevant integrals in Lemma~\ref{lemm:DoubleCosetIntegral} (see Corollary~\ref{coro:MethStatPhaseForUs}).

\begin{prop}\label{prop:MovingToFixed}  Let $j \in \{1, \cdots, q\}$, $m \in \ZZ$, $|\eta| \leq \min\left(\frac{B_1-B_0}{2}, \frac 1 4 \right)$, and $T -\min\left(\frac{B_1-B_0}{2}, \frac 1 4 \right)  >y$.  Then, as $T \rightarrow \infty$, we have that \begin{align*}\int_0^1 & \phi^{(\kappa_j)}_{T, \eta} (x+iy, 0) e(mx)~\wrt x \\\nonumber & =  O\left(\frac {1}{T^{3/2}}\right)  \\\nonumber &+ y  \sum_{\substack{\Gamma_\infty\begin{pmatrix}  a & b \\ c & d \end{pmatrix} \Gamma_\infty \in \Gamma_\infty \backslash \sigma_j^{-1}\Gamma/\Gamma_\infty \\  \quad c>0}}  \int_{-\sqrt{B} }^{\sqrt{B}}  \left(\int_0^1h(t, -2\arg(x+i))~\wrt t\right) e\left(m xy- m \frac d c\right)~\wrt x 
  \end{align*} where $B := \max\{1/Tyc^2 -1, 0\}$ and the implied constant depends on $h$, $\kappa_1$, and $\kappa_j$.  
 
\end{prop}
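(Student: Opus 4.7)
The plan is to apply Lemma~\ref{lemm:DoubleCosetIntegral} to $\phi^{(\kappa_j)}_{T,\eta}$ with $f = f_{T,\eta}$ and then estimate the resulting pieces separately. First I would dispose of the diagonal $\delta_{1j}$ term: since $|\eta| \leq \min\bigl((B_1-B_0)/2,\, 1/4\bigr)$ and the hypothesis gives $y < T - \min\bigl((B_1-B_0)/2,\, 1/4\bigr) \leq T - |\eta|$, the value $y$ lies outside $\supp(\ind_{[T,\infty),\eta})$, so $f_{T,\eta}(x+iy,0)$ vanishes identically in $x$ and the diagonal contribution is zero.

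For each double coset with $c>0$, using $f_{T,\eta}(z,\theta) = \ind_{[T,\infty),\eta}(\im z)\,h(\re z,\theta)$ with $z = a/c - 1/(c^2y(x+i))$, the integrand of Lemma~\ref{lemm:DoubleCosetIntegral} becomes
\[\ind_{[T,\infty),\eta}\!\left(\tfrac{1}{c^2 y (x^2+1)}\right) h\!\left(\tfrac{a}{c} - \tfrac{x}{c^2 y (x^2+1)},\, -2\arg(x+i)\right) e\!\left(mxy - \tfrac{md}{c}\right).\]
Since $\ind_{[T,\infty),\eta}$ is supported in $[T-|\eta|,\infty)$ and equals $1$ on $[T+|\eta|,\infty)$, the $x$-integral is effectively over an interval whose symmetric difference with $[-\sqrt{B},\sqrt{B}]$ has length $O(|\eta|/(Tc))$; in particular only $c \lesssim (Ty)^{-1/2}$ contribute nontrivially.

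The crux is a Fourier expansion of $h$ in its first variable, $h(u,\theta) = \sum_{k\in\ZZ} a_k(\theta)\,e(ku)$, with $a_0(\theta) = \int_0^1 h(t,\theta)\,\wrt t$ and, since $h$ is smooth and periodic, $|a_k(\theta)| \ll_{h,N} |k|^{-N}$ for every $N \geq 0$. The $k=0$ term, after replacing the smoothed cutoff by the hard cutoff $\ind_{[-\sqrt{B},\sqrt{B}]}$, produces exactly the main term stated in the proposition; the error from this replacement is absorbed in the claimed $O(T^{-3/2})$ using the length bound above, summed over admissible $c$.

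For $k \neq 0$ I would invoke the method of stationary phase on the resulting oscillatory integral, whose phase $\psi_k(x) = -\tfrac{kx}{c^2y(x^2+1)} + mxy$ has stationary points at $x = \pm 1$ (these fall inside $[-\sqrt{B},\sqrt{B}]$ precisely when $B \geq 1$) with $|\psi_k''(\pm 1)| \asymp |k|/(c^2 y)$. Each stationary point then contributes $O(\sqrt{c^2 y/|k|})$, while non-stationary regions and the case $B<1$ are controlled by repeated integration by parts and yield strictly smaller contributions. Summing first over $k\neq 0$ using the rapid decay of $|a_k|$ gives a per-coset bound $O(c\sqrt{y})$; multiplying by the overall factor of $y$ from Lemma~\ref{lemm:DoubleCosetIntegral} and summing over $1 \leq c \lesssim (Ty)^{-1/2}$ produces a total of order $y \cdot \sqrt{y} \cdot (Ty)^{-1} \asymp T^{-3/2}$, matching the desired error. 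The principal obstacle will be making the stationary-phase bound uniform in the parameters $c, k, m, \eta$, and handling cleanly the borderline cosets for which $B$ is of order $1$, where the stationary points sit near the endpoints of integration.
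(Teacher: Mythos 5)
Your proposal follows essentially the same route as the paper's own proof: apply Lemma~\ref{lemm:DoubleCosetIntegral}, kill the diagonal term using the support condition, expand $h$ in a Fourier series in its first variable, extract the constant mode as the main term, dispose of the nonzero modes by stationary phase at $x=\pm1$, and sum over double cosets via the trivial Kloosterman-average bound. Three imprecisions, two of which correspond exactly to where the paper expends real effort, are worth flagging. First, the asserted symmetric-difference bound $O(|\eta|/(Tc))$ between the smoothed support and $[-\sqrt B,\sqrt B]$ is not correct: for borderline cosets with $A:=1/(Tyc^2)-1$ of order $|\eta|/T$, the difference $\sqrt{A_\eta}-\sqrt A$ is of order $\sqrt{|\eta|/T}$, which is much larger; the paper proves the bound $O\bigl(\sqrt{|\eta|}/(T\sqrt y\,c)\bigr)$ by splitting on whether $A>|\eta|/T$, and this then feeds through the Kloosterman count to give the (still acceptable) contribution $O(\sqrt{|\eta|}/T^{3/2})$. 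Second, the stationary-phase contribution $O(\sqrt{c^2y/|k|})$ per stationary point presupposes the amplitude $\widehat h(k,\mp\pi/2)$ is nonzero there, which is needed to apply the standard stationary-phase theorem directly as an asymptotic; the paper's Lemma~\ref{lemm:MethStatPhaseForUs} spends most of its length on exactly the degenerate cases (adding a nonzero constant when $\widehat h(k,-2\arg(x+i))$ is not locally zero at $\pm1$, or integrating by parts when it is), which is precisely the ``obstacle'' you flag at the end but do not resolve. Third, by putting $mxy$ into the phase $\psi_k$ rather than the amplitude (as the paper does), your stationary points are only approximately at $\pm1$; the paper's placement of $e(mxy)$ in the slowly varying amplitude $q(x)$ sidesteps having to track how the stationary points shift with $m$ and $y$.
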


\begin{rema}\ \begin{enumerate}

\item In the proposition, $\eta$ is allowed to be $0$.

\item Note that, for fixed $T$ and $y$, the sum over the double cosets is finite because $B=0$ when $c$ is large.  

\item Also, note that there is a minimum strictly positive $c$ (which depends on the cusps $\kappa_1$ and $\kappa_j$) in this setup.  If $\sqrt{B}$ is strictly less than this minimum, then \[\int_0^1  \phi^{(\kappa_j)}_{T,0}(x+iy, 0) e(mx)~\wrt x=0.\]
\end{enumerate}
\end{rema}
      
As a corollary of the proposition, we have that the constant term of the Fourier expansion of $h$ with respect to $x$ gives the dominant behavior for our setup.

\begin{coro}\label{coro:MovingToFixed}
Let $j \in \{1, \cdots, q\}$, $m \in \ZZ$, $|\eta| \leq \min\left(\frac{B_1-B_0}{2}, \frac 1 4 \right)$, and $T -\min\left(\frac{B_1-B_0}{2}, \frac 1 4 \right)  >y$.  Then, as $T \rightarrow \infty$, we have that  \begin{align*} \int_0^1 & \phi^{(\kappa_j)}_{T, \eta}(x+iy, 0) e(mx)~\wrt x  =  \int_0^1  \varphi^{(\kappa_j)}_{T, \widetilde{\eta}}(x+iy, 0) e(mx)~\wrt x + O\left(\frac {1}{T^{3/2}}\right) \end{align*} where the implied constant depends on $h$, $\kappa_1$, and $\kappa_j$.  Here, $\widetilde{\eta}$ is a real number such that $|\widetilde{\eta}| \leq \min\left(\frac{B_1-B_0}{2}, \frac 1 4 \right)$.  
\end{coro}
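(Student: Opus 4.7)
The plan is a direct two-step application of Proposition~\ref{prop:MovingToFixed}, followed by the observation that the leading double-coset sums produced by the two applications coincide. The crucial structural feature of Proposition~\ref{prop:MovingToFixed} that makes this possible is that its main term involves $h$ only through the partial integral $\int_0^1 h(t,-2\arg(x+i))\,\wrt t$ in the integrand, and that the dependence on the smoothing parameter $\eta$ is entirely absorbed into the $O(T^{-3/2})$ remainder.

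First I would apply Proposition~\ref{prop:MovingToFixed} to $\phi^{(\kappa_j)}_{T,\eta}$, which is built from the cuspidal function $f_{T,\eta}(z,\theta) = \ind_{[T,\infty),\eta}(y) h(x,\theta)$. This produces
\[
\int_0^1 \phi^{(\kappa_j)}_{T,\eta}(x+iy,0)\, e(mx)\,\wrt x = S(h,m) + O(T^{-3/2}),
\]
where $S(h,m)$ denotes the double-coset sum on the right-hand side of the proposition. Next I would apply the proposition a second time, now to $\varphi^{(\kappa_j)}_{T,\widetilde{\eta}}$. Its defining cuspidal function
\[
g_{T,\widetilde{\eta}}(z,\theta) = \ind_{[T,\infty),\widetilde{\eta}}(y)\,\bar h(\theta), \qquad \bar h(\theta) := \int_0^1 h(t,\theta)\,\wrt t,
\]
has exactly the same shape as $f_{T,\eta}$ but with the $x$-independent function $\bar h$ replacing $h$ and $\widetilde{\eta}$ replacing $\eta$. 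The proposition therefore yields
\[
\int_0^1 \varphi^{(\kappa_j)}_{T,\widetilde{\eta}}(x+iy,0)\, e(mx)\,\wrt x = S(\bar h,m) + O(T^{-3/2}).
\]

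The final step is to note the identity $S(h,m) = S(\bar h,m)$: the inner integral $\int_0^1 h(t,-2\arg(x+i))\,\wrt t$ appearing in $S(h,m)$ is, by definition, $\bar h(-2\arg(x+i))$, and substituting $\bar h$ for $h$ in the proposition just reproduces the same quantity. Subtracting the two displayed identities then gives the corollary.

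I do not anticipate a substantive obstacle, since the hard analytic work is already carried out by Proposition~\ref{prop:MovingToFixed}; the remainder is bookkeeping. The items I would check are (a) that the shared hypothesis $T - \min((B_1-B_0)/2, 1/4) > y$ is available for both applications, which it is, since this is precisely the hypothesis of the corollary and both $|\eta|$ and $|\widetilde{\eta}|$ lie in the admissible range $\bigl(0, \min((B_1-B_0)/2, 1/4)\bigr]$; and (b) that the implied constant from the second application, nominally depending on $\bar h$, is controlled in terms of $h$, which follows from $\|\bar h\|_\infty \le \|h\|_\infty$ together with the analogous bound for the relevant derivatives.
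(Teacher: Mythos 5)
Your proof is correct and follows essentially the same route as the paper: both arguments rest on the observation that the main term of Proposition~\ref{prop:MovingToFixed} involves $h$ only through $\int_0^1 h(t,\cdot)\,\wrt t = \bar h(\cdot)$, so replacing $h$ by $\bar h$ (which converts $\phi$ to $\varphi$) leaves it unchanged. The only cosmetic difference is that you invoke Proposition~\ref{prop:MovingToFixed} a second time for $\varphi^{(\kappa_j)}_{T,\widetilde\eta}$, whereas the paper applies Lemma~\ref{lemm:DoubleCosetIntegral} directly to $\varphi^{(\kappa_j)}_{T,0}$ (where the main term is exact because $\bar h$ has no nonzero $x$-Fourier modes) and then separately bounds the $\varphi_{T,0}$-versus-$\varphi_{T,\widetilde\eta}$ discrepancy; these paths are equivalent.
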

\begin{proof}
 Apply the proposition to $\phi^{(\kappa_j)}_{T, \eta}$ and Lemma~\ref{lemm:DoubleCosetIntegral} to $\varphi^{(\kappa_j)}_{T, 0}$.  For $\widetilde{\eta} \neq 0$, the error between the application of Lemma~\ref{lemm:DoubleCosetIntegral} to $\varphi^{(\kappa_j)}_{T, 0}$ and to $\varphi^{(\kappa_j)}_{T, \widetilde{\eta}}$ is $O\left(\frac {\sqrt{|\widetilde{\eta}|}}{T^{3/2}}\right)$ where the implied constant depends on $h$, $\kappa_1$, and $\kappa_j$.  The proof is the same as the proof of the analogous additional error in the proof of Proposition~\ref{prop:MovingToFixed}.  This yields the desired result.  \end{proof}

\noindent More generally, we have 

\begin{theo}\label{thm:EscNeighDepOnConst}
 Let $j \in \{1, \cdots, q\}$, $|\eta| \leq \min\left(\frac{B_1-B_0}{2}, \frac 1 4 \right)$, $T -\min\left(\frac{B_1-B_0}{2}, \frac 1 4 \right) >y$, and $\frac 1 2 \geq \delta>0$.  Then, as $T \rightarrow \infty$, we have that  \begin{align*}\int_\alpha^\beta & \phi^{(\kappa_j)}_{T, \eta}(x+iy, 0)~\wrt x  = \int_\alpha^\beta  \varphi^{(\kappa_j)}_{T, \widetilde{\eta}}(x+iy, 0)~\wrt x + O\left(\frac {1}{T^{\delta}}\right) \end{align*} so long as $\beta - \alpha$ remains bounded and bigger than $T^{-3/4 + \delta}$.  Here $\widetilde{\eta}$ is a real number such that $|\widetilde{\eta}| \leq \min\left(\frac{B_1-B_0}{2}, \frac 1 4 \right)$, and the implied constant depends on $h$, $\kappa_1$, $\kappa_j$, and the maximum value of $\beta-\alpha$.   
\end{theo}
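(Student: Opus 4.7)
The plan is to replace $\ind_{[\alpha,\beta]}$ by a smooth $1$-periodic mollification $\chi_\epsilon$, Fourier-expand $\chi_\epsilon$, and apply Corollary~\ref{coro:MovingToFixed} mode by mode. By the $1$-periodicity in $x$ of both $\phi^{(\kappa_j)}_{T,\eta}(\cdot+iy,0)$ and $\varphi^{(\kappa_j)}_{T,\widetilde{\eta}}(\cdot+iy,0)$ together with the boundedness of $\beta-\alpha$, I may reduce to $[\alpha,\beta]\subset[0,1]$ (otherwise split the integral into a bounded number of unit-length pieces). Writing
\[G(x) := \phi^{(\kappa_j)}_{T,\eta}(x+iy,0)-\varphi^{(\kappa_j)}_{T,\widetilde{\eta}}(x+iy,0),\]
the goal becomes $\int_\alpha^\beta G(x)\,\wrt x = O(T^{-\delta})$. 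For a parameter $\epsilon\in(0,(\beta-\alpha)/4]$ to be chosen, let $\chi_\epsilon\in C^\infty(\RR/\ZZ;[0,1])$ be a mollification of $\ind_{[\alpha,\beta]}$ equal to $1$ on $[\alpha+\epsilon,\beta-\epsilon]$, supported in $[\alpha-\epsilon,\beta+\epsilon]$, and satisfying $\|\chi_\epsilon^{(k)}\|_\infty\lesssim_k\epsilon^{-k}$. Decompose
\[\int_\alpha^\beta G\,\wrt x \;=\; \int_0^1 \chi_\epsilon G\,\wrt x \;+\; \int_0^1 (\ind_{[\alpha,\beta]}-\chi_\epsilon)\,G\,\wrt x.\]

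The second integral (the smoothing error) is at most $\|\ind_{[\alpha,\beta]}-\chi_\epsilon\|_{L^1}\|G\|_\infty\leq 2\epsilon\|G\|_\infty$. A direct count of the pairs $(c,d)$ in $\Gamma_\infty\backslash\sigma_j^{-1}\Gamma/\Gamma_\infty$ for which the summand of $\phi^{(\kappa_j)}_{T,\eta}$ can be non-zero at $z=x+iy$ (the support condition is $|c(x+iy)+d|^2\leq y/T$, admitting at most $O((Ty)^{-1/2})$ such pairs at any fixed $x$) gives $\|G\|_\infty\lesssim_h(Ty)^{-1/2}$, so this contribution is $O(\epsilon(Ty)^{-1/2})$. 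For the first integral, expand $\chi_\epsilon(x)=\sum_{m\in\ZZ}\widehat{\chi}_\epsilon(m)e(mx)$ (absolutely and uniformly convergent since $\chi_\epsilon\in C^\infty$), interchange sum and integral, and apply Corollary~\ref{coro:MovingToFixed} mode by mode:
\[\int_0^1 \chi_\epsilon G\,\wrt x \;=\; \sum_{m\in\ZZ}\widehat{\chi}_\epsilon(m)\int_0^1 G(x)e(mx)\,\wrt x \;=\; \sum_{m\in\ZZ}\widehat{\chi}_\epsilon(m)\cdot O(T^{-3/2}),\]
with the error $O(T^{-3/2})$ uniform in $m$. Two integrations by parts yield the Jackson-type bound $|\widehat{\chi}_\epsilon(m)|\lesssim\min(1,|m|^{-1},(\epsilon m^2)^{-1})$, and splitting the sum at $|m|=1/\epsilon$ gives $\sum_m|\widehat{\chi}_\epsilon(m)|\lesssim\log(1/\epsilon)$, so this contribution is $O(T^{-3/2}\log(1/\epsilon))$.

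Combining, the total error is $O\bigl(\epsilon(Ty)^{-1/2}+T^{-3/2}\log(1/\epsilon)\bigr)$. I choose $\epsilon$ as the minimum of $(\beta-\alpha)/4$ and an expression of the shape $T^{-\delta}(Ty)^{1/2}$ so that the smoothing contribution is $\leq T^{-\delta}$; the hypothesis $\beta-\alpha\geq T^{-3/4+\delta}$ (combined with $y<T$) ensures this choice is admissible under $\epsilon\leq(\beta-\alpha)/4$ and keeps $\log(1/\epsilon)\lesssim\log T$, so the Fourier term $O(T^{-3/2}\log T)$ is negligible compared to $T^{-\delta}$. The principal obstacle is that the uniform-in-$m$ decay $O(T^{-3/2})$ supplied by Corollary~\ref{coro:MovingToFixed} is not, by itself, summable against the non-decaying Fourier coefficients of the discontinuous $\ind_{[\alpha,\beta]}$, which forces the smoothing-plus-Jackson-tail argument; carefully balancing the smoothing width $\epsilon$ against both the interval length $\beta-\alpha$ and the $L^\infty$-size of $\phi-\varphi$ on the horocycle is exactly where the threshold $T^{-3/4+\delta}$ arises.
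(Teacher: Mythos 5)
Your overall strategy matches the paper's: smooth the sharp cutoff $\ind_{[\alpha,\beta]}$, Fourier-expand, apply Corollary~\ref{coro:MovingToFixed} mode by mode, and control the tail of the Fourier series.  Two structural differences are worth noting and both are improvements in clarity: you work directly with $G:=\phi^{(\kappa_j)}_{T,\eta}-\varphi^{(\kappa_j)}_{T,\widetilde{\eta}}$ and a single two-sided smoothing $\chi_\epsilon$, whereas the paper smooths from above and below ($\chi^\pm$), first treats $h\geq 0$, and then recovers general $h$ by adding/subtracting a constant.  Also, you sum the Fourier coefficients using the sharper decay $|\widehat{\chi}_\epsilon(m)|\lesssim\min(1,|m|^{-1},(\epsilon m^2)^{-1})$ to get a tail $O(\log(1/\epsilon))$, while the paper uses the coarser $|\widehat{\chi^\pm}(m)|\lesssim\varepsilon^{-2}m^{-2}$ and a tail $O(\varepsilon^{-2})$; both are adequate for the respective choice of mollification width.

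However, there is a genuine gap in how you control the smoothing error.  You bound $\|G\|_\infty\lesssim_h(Ty)^{-1/2}$ and then try to pick $\epsilon\lesssim T^{-\delta}(Ty)^{1/2}$, claiming $\log(1/\epsilon)\lesssim\log T$.  The theorem places no lower bound on $Ty$ (only $y<T-\min(\frac{B_1-B_0}{2},\frac14)$, which bounds $Ty$ from above by $T^2$), so $Ty$ may be, say, $e^{-T^2}$, forcing $\epsilon\lesssim T^{-\delta}e^{-T^2/2}$ and hence $\log(1/\epsilon)\gtrsim T^2$, which makes $T^{-3/2}\log(1/\epsilon)$ blow up rather than be $O(T^{-\delta})$.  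The assertion that the hypotheses keep $\log(1/\epsilon)\lesssim\log T$ is therefore unjustified.  The fix is to replace the crude count by the sharp fact that the cuspidal neighborhoods $\{\im\sigma_j^{-1}\gamma z>T'\}$, $\gamma\in\Gamma_j\backslash\Gamma$, are pairwise disjoint once $T'>B_0$ (which holds here since $T\geq B_1$ and $|\eta|,|\widetilde{\eta}|\leq\frac{B_1-B_0}{2}$), so that at any point of the horocycle at most one term of each Poincar\'e series is non-zero; this gives $\|\phi^{(\kappa_j)}_{T,\eta}\|_\infty\leq M$ and $\|\varphi^{(\kappa_j)}_{T,\widetilde{\eta}}\|_\infty\leq M$, hence $\|G\|_\infty\leq 2M$ uniformly.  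This is in effect what the paper uses when it bounds the smoothing discrepancy by $2\varepsilon M$.  With $\|G\|_\infty\leq 2M$ you may take $\epsilon=\min\bigl((\beta-\alpha)/4,\,T^{-\delta}\bigr)$ (or, matching the paper, $\epsilon=\frac14T^{-3/4+\delta/2}$, which is below $(\beta-\alpha)/4$ by the hypothesis $\beta-\alpha\geq T^{-3/4+\delta}$); then $\log(1/\epsilon)\lesssim\log T$ unconditionally in $y$, the smoothing error is $O(\epsilon)=O(T^{-\delta})$ since $\delta\leq\frac12$, and the Fourier tail $O(T^{-3/2}\log T)$ is negligible, closing the argument.
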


Finally, we show that $Ty \rightarrow 0$ is necessary in our main results, Theorems~\ref{thm:TanBunRelEqui} and~\ref{thm:EffectTanBunRelEqui}, by giving an example of a case for which $Ty \not \rightarrow 0$.

\begin{example}\label{eg:NonEquidWhenTyLarge}  For this example and Remark~\ref{rmk:NonEquidWhenTyLarge}, set $\Gamma = \PSL(2, \ZZ)$.   Define \[g_{T,0}(z, \theta) := \ind_{[T, \infty)}(y) \quad \quad \varphi_{T,0}(z, \theta) := \sum_{\gamma \in \Gamma_\infty \backslash \Gamma} g_{T,0}(\gamma(z, \theta)).\]  Let $Ty = 1/4$.  Then  \begin{align}\label{eqn:TyNotZero}
 T  \int_0^1 \varphi_{T,0}(x+iy, 0)~\wrt x = \frac{\sqrt{3}} 2 \end{align} as $T \rightarrow \infty$, but \[\frac{1}{\mu(\Gamma \backslash G)} \int_0^{2\pi}\int_0^1~\wrt x~\wrt\theta= \frac 3 \pi.\]  Note that the derivation of (\ref{eqn:TyNotZero}) follows easily from Proposition~\ref{prop:MovingToFixed}.  The gist of this derivation is to consider the left-hand side of (\ref{eqn:IdentityAreaModSurSumEuler}) {\em without} the limit but, instead, with $x$ set to $2$.

\end{example}

\begin{rema}\label{rmk:NonEquidWhenTyLarge}

If, in the example, we let $Ty \rightarrow 0$, then Theorem~\ref{thm:TanBunRelEqui} applies, from which the identity \begin{align}\label{eqn:IdentityAreaModSurSumEuler}
 \lim_{x \rightarrow \infty} 2\sum_{c=1}^{\lfloor x \rfloor}\frac{\varphi(c)} c \frac{\sqrt{x^2 - c^2}}{x^2} = \frac 3 \pi \end{align} follows by Proposition~\ref{prop:MovingToFixed}.  Here $\varphi(\cdot)$ is the Euler totient function, and $\lfloor \cdot \rfloor$ is the floor function.  Thus, we have obtained a number-theoretic identity.  The same identity will also follow from applying a version of Proposition~\ref{prop:MovingToFixed} to~\cite[Theorem~2]{St} (or the other versions of horocycle equidistribution mentioned above).  Finally, Shucheng Yu has pointed out, in personal communication, an alternative proof of (\ref{eqn:IdentityAreaModSurSumEuler}) using summation by parts and an asymptotic estimate, due to Walfisz~\cite{Wal63}, for the sum of the Euler totient function over the natural numbers from $1$ to $n$.  This same observation has been pointed out by both referees.  Furthermore, one of the referees has pointed out another alternative proof using double cosets and, in particular, using the asymptotic estimate for Kloosterman sums~\cite[Theorem~4]{Goo83} to count double cosets, namely \begin{align}\label{eqn:StrongAsyEstKloo}
 \sum_{1\leq c\leq x} \#(\textrm{double cosets indexed by } c) \sim \frac{2x^2}{\mu(\Gamma \backslash G)}  \end{align}as $x \rightarrow \infty$.  (See also~\cite[Corollary on pages 119-20]{Goo83}.)  Now applying summation by parts to (\ref{eqn:StrongAsyEstKloo}) yields (\ref{eqn:IdentityAreaModSurSumEuler}).  The author is grateful to Yu and the referees for their observations.

\end{rema}

\subsection{Proof of Proposition~\ref{prop:MovingToFixed}}

\begin{proof}[Proof of Proposition~\ref{prop:MovingToFixed}]  We first give the proof for $\phi^{(\kappa_j)}_{T,0}$.
Let $f = f_{T, 0}$ and $c>0$.  Let us compute \[\int_{-\infty}^\infty f\left(\frac a c - \frac 1{c^2 y} \frac 1 {x+i}, - 2\arg(x+i) \right) e\left(m xy- m \frac d c\right)~\wrt x.\]  Now \begin{align*} f\left(\frac a c - \frac 1{c^2 y} \frac 1 {x+i}, \theta \right) = \ind_{[T, \infty)}\left(\frac{1}{yc^2(x^2+1)}\right) h\left(\frac a c - \frac 1 {y c^2} \frac x {x^2+1}, \theta\right),
  \end{align*} which is zero if  \begin{align}\label{eqn:NonZeroCond}
 x^2 \leq 1/Tyc^2 -1 =: A\end{align} does {\em not} hold.  If $A<0$, then (\ref{eqn:NonZeroCond}) cannot hold for any value of $x$, and, thus, the integral is zero.  
 
 Otherwise, when $A \geq 0$, (\ref{eqn:NonZeroCond}) holds for some values of $x$, and, thus, the integral may be nonzero.  We now estimate the integral when $A\geq0$.  Note that, since $A \geq0$, we have \begin{align} \label{eqn:RelRatesTy} Tyc^2 \leq 1
  \end{align} holds, and we can replace the integration bounds with $-\sqrt A$ to $\sqrt A$.  By smoothness and periodicity, we have the Fourier series representation of $h$, \[h(x,\theta)  = \sum_{j \in \ZZ} \widehat{h}(j, \theta) e(-j x) = \widehat{h}(0, \theta) +\sum_{j \in \ZZ \backslash\{0\}} \frac{\widehat{h_2}(j, \theta)}{-4 \pi^2 j^2} e(-j x).\]  We consider the integral term-by-term (because the Fourier series converges uniformly to $h$).  Thus, for each $j \in \ZZ$, we need to evaluate the following integral:  \[I(T, j):=e\left(- m \frac d c - j \frac a c \right)\int_{-\sqrt{A}}^{\sqrt{A}} e^{i T p(x)}q(x)~\wrt x\] where \begin{align*}
p(x):= 2\pi j(A+1) \frac{x}{x^2+1}, \quad \quad q(x) := \widehat{h}(j, - 2\arg(x+i)) e(mxy).\end{align*}  We wish to compute the asymptotics of the integral as $T \rightarrow \infty$, uniformly for all $A \geq0$ and all $ y \geq 0$, using the method of stationary phase (see, for example,~\cite{Olv} for an introduction).  The stationary points are $x=\pm1$.   

For convenience, let us introduce the following notation.  Let $X$ be a topological space.  The function $g:X \rightarrow \CC$ is {\em locally zero at $x_0 \in X$} if there exists an open neighborhood $U$ of $x_0$ in $X$ such that $g(x) = 0$ for every point $x \in U$.

\begin{lemm}\label{lemm:MethStatPhaseForUs}  Let $j \in \ZZ \backslash \{0\}$.  We have that \[I(T, j) = I_1(T, j) + I_{-1}(T,j)\] where \begin{align*} & I_1(T) := I_1(T,j) := \\ &\begin{cases} O\left(\frac{\widehat{h}(j, -\pi/2)} {\sqrt {|j|T}} \right)  & \textrm{ if } \widehat{h}(j, -\pi/2) \neq 0 \\
O\left(\frac{\widehat{h}(j, \theta_0)} {\sqrt {|j|T}}\right) & \textrm{ if } \widehat{h}(j, -\pi/2) = 0 \textrm { and } \\&  \quad \quad  \widehat{h}(j, \theta) \textrm { is not locally zero at } \theta=-\frac{\pi} 2 \\
O\left(\frac{|\widehat{h}(j, -2\arg(\beta+i))| + |\widehat{h}(j, -2\arg(\sqrt{A}+i))|}{jT} \right) + o\left(\frac{1}{j^2 T}\right) & \textrm{ if }  \widehat{h}(j, \theta) \textrm { is locally zero at } \theta=-\frac{\pi} 2\end{cases} \\
 & I_{-1}(T) := I_{-1}(T, j) :=  \\ &\begin{cases} O\left(\frac{\widehat{h}(j, \pi/2)} {\sqrt {|j|T}} \right)  & \textrm{ if } \widehat{h}(j, \pi/2) \neq 0 \\
O\left(\frac{\widehat{h}(j, \theta_1)} {\sqrt {|j|T}}\right) & \textrm{ if } \widehat{h}(j, \pi/2) = 0 \textrm { and } \\&  \quad \quad  \widehat{h}(j, \theta) \textrm { is not locally zero at } \theta=\frac{\pi} 2 \\
O\left(\frac{|\widehat{h}(j, -2\arg(\beta+i))| + |\widehat{h}(j, -2\arg(-\sqrt{A}+i))|}{jT} \right) + o\left(\frac{1}{j^2 T}\right) & \textrm{ if }  \widehat{h}(j, \theta) \textrm { is locally zero at } \theta=\frac{\pi} 2\end{cases}
  \end{align*} as $T \rightarrow \infty$.  Here $\theta_0$ and $\theta_1$ are any values in $[0, 2 \pi)$ for which $\widehat{h}(j, \theta_0) \neq 0$ and $-1 < \beta <1$ is any value bounded away from both $-1$ and $1$.  The first implied constant of the third case for $I_{-1}(T)$ depends on $h$ and the first implied constant of the third case for $I_{1}(T)$ depends on $h$.  All other implied constants have no dependence.
 
\end{lemm}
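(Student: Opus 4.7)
The plan is to split $I(T,j)$ into contributions localized near each of the two non-degenerate stationary points $x=\pm 1$ of the phase $p(x) = 2\pi j(A+1)\frac{x}{x^2+1}$. A direct computation gives
\[p'(x) = 2\pi j(A+1)\frac{1-x^2}{(x^2+1)^2}, \qquad p''(\pm 1) = \mp\pi j(A+1),\]
and, since $-2\arg(1+i)=-\pi/2$ and $-2\arg(-1+i)=\pi/2$, the amplitude at the two stationary points is $q(\pm 1)=\widehat h(j,\mp\pi/2)\,e(\pm my)$. I therefore fix a smooth partition of unity $\chi_+ + \chi_- + \chi_0 \equiv 1$ on $[-\sqrt A,\sqrt A]$ with $\chi_\pm$ supported in a small $A$-independent neighborhood of $\pm 1$, and define $I_{\pm 1}(T)$ as the integral against $\chi_\pm$. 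The $\chi_0$-piece, on which $|p'|$ is bounded below by a positive constant times $|j|(A+1)$, is handled by repeated integration by parts and absorbed into the error in each case.

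For cases (i) and (ii) I apply the standard one-point stationary phase expansion: if $p$ is Morse at $x_0$ with $p'(x_0)=0$ and $q\in C^2$, then
\[\int e^{iTp(x)}\chi_+(x)q(x)\,\wrt x = \sqrt{\frac{2\pi}{T|p''(x_0)|}}\,e^{iTp(x_0)-i\pi\sgn(p''(x_0))/4}\,q(x_0) + O\!\left(\frac{\|(\chi_+ q)''\|_\infty}{T}\right).\]
At $x_0=1$ this yields a leading term of size $|\widehat h(j,-\pi/2)|/\sqrt{|j|T(A+1)}\le |\widehat h(j,-\pi/2)|/\sqrt{|j|T}$, since $A\ge 0$; this is case (i). Case (ii), where the leading term vanishes but $\widehat h(j,\cdot)$ is not locally zero at $-\pi/2$, follows from the same bound with the constant absorbed into any nearby value $\widehat h(j,\theta_0)\ne 0$ (the error term from the expansion, controlled by $\|(\chi_+ q)''\|_\infty/T$, is in fact $O(1/(|j|T))$ and thus subsumed).

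For case (iii) the support of $\chi_+ q$ lies in a compact set on which $p'$ does not vanish, so I integrate by parts against $e^{iTp}=(iTp')^{-1}\frac{d}{dx}e^{iTp}$. This produces boundary terms of size $|q(\beta)|/(T|p'(\beta)|)$ and $|q(\sqrt A)|/(T|p'(\sqrt A)|)$, where $\beta\in(-1,1)$ is a fixed value bounded away from $\pm 1$, so that $|p'(\beta)|\gtrsim |j|(A+1)\gtrsim |j|$ and likewise at the endpoint $\sqrt A$; this gives the claimed $O(\cdot/(|j|T))$ contribution. The remaining oscillatory integral $\int(q/p')'\,e^{iTp}\,\wrt x$ is $o(1/(j^2 T))$ by a second integration by parts combined with the Riemann--Lebesgue lemma applied to the smooth factor. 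The analysis of $I_{-1}(T)$ is entirely symmetric, with $x=1$ replaced by $x=-1$ and $-\pi/2$ by $\pi/2$.

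The main technical obstacle is uniformity of all implied constants in $A\ge 0$: both the length $2\sqrt A$ of the integration interval and the size $|p''(\pm 1)|=\pi|j|(A+1)$ grow with $A$, but the stationary-phase prefactor $(A+1)^{-1/2}$ exactly balances the Morse expansion, and $\chi_\pm$ can be chosen once and for all independently of $A$. The special case $A<1$, in which the stationary points lie outside $[-\sqrt A,\sqrt A]$, reduces to the non-stationary estimate and is subsumed by the same bounds.
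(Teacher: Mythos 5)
Your use of a smooth partition of unity $\chi_++\chi_-+\chi_0$ is a genuine structural departure from the paper, which instead cuts $[-\sqrt A,\sqrt A]$ at the stationary points $\pm1$ (and at $\pm\sqrt A$) and treats each piece as a \emph{one--sided} stationary-phase integral via Olver's Chapter~3, Theorem~13.1/13.2. The partition-of-unity route introduces a complication you do not resolve: when $1<A<(1+\delta_0)^2$ (i.e.\ $\sqrt A$ lands inside the bump of $\chi_+$), the $\chi_+$-integral is again a \emph{boundary} stationary-phase problem at $\sqrt A$, and your dichotomy ``interior stationary point $+$ non-stationary remainder'' does not apply. The paper's split at $\pm1$ handles this automatically.

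The more serious gap is in cases (i) and (ii). The bound you want is $I_1(T)=O\bigl(\widehat h(j,\mp\pi/2)/\sqrt{|j|T}\bigr)$, resp.\ $O\bigl(\widehat h(j,\theta_0)/\sqrt{|j|T}\bigr)$, with an \emph{absolute} implied constant. The leading stationary-phase term has exactly this form, but your error term $O\bigl(\|(\chi_+q)''\|_\infty/T\bigr)$ is controlled by $L^\infty$-norms of derivatives of $q$, not by $\widehat h(j,\mp\pi/2)$ or $\widehat h(j,\theta_0)$, and it can dominate the leading term when that Fourier coefficient is small. In case (ii) the issue is acute because the leading term vanishes outright ($q(1)=0$), so the \emph{entire} integral is the ``error'' and saying it is ``subsumed'' into $O(\widehat h(j,\theta_0)/\sqrt{|j|T})$ requires justification. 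The paper circumvents this by (a) appealing to Olver's theorem in its asymptotic form $I\sim(\text{leading term})$, which directly yields $I=O(\text{leading term})$ for $T$ large without an additive error of a different shape, and (b) for case (ii), writing $q=(q+\widehat h(j,\theta_0))-\widehat h(j,\theta_0)$ and applying Olver's result to each summand, each of whose amplitude is $\widehat h(j,\theta_0)\neq0$ at the stationary point. You should either reproduce some version of this add-and-subtract device or give an explicit argument that, for $T$ large (depending on $h$ and $j$), the error term is subordinate to the target bound --- as written, the claim is asserted rather than proved.
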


\begin{rema} \quad \begin{itemize}
\item One can choose $\theta_0$ arbitrarily close to $-\pi/2$ and $\theta_1$ arbitrarily close to $\pi/2$ if desired.  

\item The error terms $o\left(\frac{1}{j^2 T}\right)$ can be replaced by $o\left(\frac{1}{j^\ell T}\right)$ where $\ell$ is any natural number because $h$ is a $C^\infty$-function.
\item The dependence on $h$ in the two implied constants can be made explicit by inspection of the proof below.

\item In the case that $\widehat{h}(j, \theta) \textrm { is locally zero at } \theta=\frac{\pi} 2$ and $\theta=-\frac{\pi} 2$, we can have \[I(T, j) = O\left(\frac{|\widehat{h}(j, -2\arg(\sqrt{A}+i))| + |\widehat{h}(j, -2\arg(-\sqrt{A}+i))|}{jT} \right) + o\left(\frac{1}{j^\ell T}\right)\] where where $\ell$ is any natural number.   Here, the first implied constant depends on $h$ (and can be made explicit) and the second has no dependence.

\end{itemize}
\end{rema}
\begin{proof}  We assume that $j \geq 1$.  The proof for $j \leq -1$ is analogous.  We break $I(T, j)$ into four (or, if $A\leq1$, two) integrals so that $\{\pm 1, \pm A\}$ appears as only one endpoint (and the other endpoint of each integral does not matter).  Let us for now assume that $\widehat{h}(j, \pi/2) \neq 0$ and $\widehat{h}(j, -\pi/2) \neq 0$. We will remove this assumption at the end.  There are a number of cases of which we begin with the case $1 < \sqrt{A}$.  The stationary points (except in a special case of $\widehat{h}(j, -2\arg(x+i))$ being locally zero at both $-1$ and $1$, detailed below) will contribute the main term, and we consider them first.  Let \[I_1^+(T):= e\left(- m \frac d c - j \frac a c \right)\int_1^\alpha e^{i T p(x)}q(x)~\wrt x\] where $\alpha := \sqrt{A}$.   Using Taylor series for $-p(x)$ and $q(x)$, we have that, as $x \rightarrow 1^+$, \[-p(x)+p(1) \sim \frac 1 2 \pi j (A+1)(x-1)^2, \quad \quad q(x) \sim \widehat{h}(j, -\pi/2)e(my)\] and, thus, the function $q(x)/p'(x)$ is of bounded variation over every closed interval $[k, \alpha]$ where $k \in (1, \alpha)$ because both the numerator and denominator are of bounded variation and the denominator is bounded away from zero.  Now we may apply~\cite[Chapter~3, Theorem~13.1]{Olv} to obtain \[I_1^+(T) \sim \frac{\sqrt{\pi}} 2 e\left(- m \frac d c - j \frac a c \right)  e^{-\pi i /4} \widehat{h}(j, -\pi/2) e(my)\frac{e^{iTp(1)}}{(\frac 1 2 \pi j (A+1)T)^{1/2}}  \] as $T \rightarrow \infty$ uniformly for all $A >1$ and $y \geq 0$.

Now substituting $A = 1/Tyc^2 -1$ from (\ref{eqn:NonZeroCond}) yields \begin{align} \label{eqn:statPointContrib}
 I_1^+(T) \sim \frac{1} {\sqrt{2}} e\left(- m \frac d c - j \frac a c + j \frac 1 {2 y c^2} - \frac 1 8\right) \widehat{h}(j, -\pi/2)e(my) \frac{c \sqrt y}{\sqrt{ j}}\end{align} as $T \rightarrow \infty$.  Applying (\ref{eqn:RelRatesTy}) gives \[I_1^+(T)= O\left(\frac{\widehat{h}(j, -\pi/2)} {\sqrt {jT}} \right)\] as $T \rightarrow \infty$.  Here the implied constant has no dependence.

Let \[I_1^-(T):= e\left(- m \frac d c - j \frac a c \right)\int_\alpha^1 e^{i T p(x)}q(x)~\wrt x\] where $-1 < \alpha <1$. Then the analogous proof gives \[I_1^-(T)= O\left(\frac{\widehat{h}(j, -\pi/2)} {\sqrt {jT}} \right)\] as $T \rightarrow \infty$.  In fact, (\ref{eqn:statPointContrib}) holds when $I^+_1(T)$ is replaced by $I^-_1(T)$.  Let $I^{+}_{-1}(T)$ and $I^-_{-1}(T)$ denote the analogous integrals for the stationary point $-1$, then the analogous proof gives \[I_{-1}^+(T)= O\left(\frac{\widehat{h}(j, \pi/2)} {\sqrt {jT}} \right) = I_{-1}^-(T)\] as $T \rightarrow \infty$.  The implied constants for these latter three integral also have no dependence.

When $A =1$, only the integrals $I_1^-(T)$ and  $I^{+}_{-1}(T)$ are needed.

For $0\leq A < 1$, we must consider the two analogous integrals $I_{\sqrt{A}}^-(T)$ and  $I^{+}_{-\sqrt{A}}(T)$.  To obtain a result uniform in $A \geq 0$, we write \[I_{\sqrt{A}}^-(T) =  e\left(- m \frac d c - j \frac a c \right)\left(\int_\alpha^1 e^{i T p(x)}q(x)~\wrt x - \int_{\sqrt{A}}^1 e^{i T p(x)}q(x)~\wrt x\right),\] from which it immediately follows that  \[I_{\sqrt{A}}^-(T)= O\left(\frac{\widehat{h}(j, -\pi/2)} {\sqrt {jT}} \right)\] as $T \rightarrow \infty$.  Similarly, for $I^{+}_{-\sqrt{A}}(T)$, we have that  \[I_{-\sqrt{A}}^+(T)= O\left(\frac{\widehat{h}(j, \pi/2)} {\sqrt {jT}} \right)\] as $T \rightarrow \infty$.  The implied constants for these two integrals have no dependence.  This proves the desired result in the cases for which both $\widehat{h}(j, \pi/2) \neq 0$ and $\widehat{h}(j, -\pi/2) \neq 0$.

Finally, consider when $\widehat{h}(j, -\pi/2) = 0$ or $\widehat{h}(j, \pi/2) = 0$.  As we have seen, the integrals $I_1^{\pm}(T)$ and $I_{\sqrt{A}}^{-}(T)$ depend on $\widehat{h}(j, -\pi/2)$ and the integrals $I_{-1}^{\pm}(T)$ and $I_{-\sqrt{A}}^{+}(T)$ depend on $\widehat{h}(j, \pi/2)$.  Let us assume that $\widehat{h}(j, -\pi/2) = 0$. There are two cases to consider.

The first case is that $\widehat{h}(j, - 2\arg(x+i))$ is not locally zero at $x=1$.  Thus, we have that, for every $\delta>0$, there exists $x \in (1 -\delta, 1+\delta)$ such that $\widehat{h}(j, - 2\arg(x+i)) \neq  0$.  Let $\theta_0$ be chosen such that  $\widehat{h}(j, \theta_0) \neq  0$.  We note that \[I_1^{+}(T)= e\left(- m \frac d c - j \frac a c \right) \left( \int_1^{\sqrt{A}} e^{i T p(x)}\left(q(x)+\widehat{h}(j, \theta_0) \right)~\wrt x - \int_1^{\sqrt{A}} e^{i T p(x)}\widehat{h}(j, \theta_0) ~\wrt x\right).\]  Apply the above proof for $I_1^+(T)$, we obtain \[I_1^+(T)= O\left(\frac{\widehat{h}(j, \theta_0)} {\sqrt {jT}} \right)\] as $T \rightarrow \infty$.  Similarly, we have \[I_1^-(T)= O\left(\frac{\widehat{h}(j, \theta_0)} {\sqrt {jT}} \right) = I_{\sqrt{A}}^{-}(T)\] as $T \rightarrow \infty$.  The implied constants have no dependence for any of these estimates.  Note that~\cite[Chapter~3, Theorem~13.2]{Olv} does not apply to the first case as the hypothesis on bounded variation does not hold.

The second case is that $\widehat{h}(j, - 2\arg(x+i))$ is locally zero at $x=1$.  Thus, we have that, there exists a $\delta>0$ for which $\widehat{h}(j, - 2\arg(x+i)) = 0$ for every $x \in (1 -\delta, 1+\delta)$.  Let $N$ denote the union of all open intervals $U$ containing $1$ such that $\widehat{h}(j, - 2\arg(x+i)) = 0$ for every $x \in U$.  Since we are in the second case, $N$ is not the empty set.  Thus, we have $N =:(r, s)$ where $r<s$ are unique values determined solely by $h$.  Here $r$ may be $-\infty$ and $s$ may be $\infty$.  We note that $\widehat{h}(j, - 2\arg(x+i)) = 0$ for every $x$ in the topological closure $\overline{N}$.  If $s$ is finite, then there exists an interval $(s, s_+)$ such that $\widehat{h}(j, - 2\arg(x+i)) \neq 0$ for every $x \in(s, s_+)$.  Likewise, if $r$ is finite, then there exists an interval $(r_-, r)$ such that $\widehat{h}(j, - 2\arg(x+i)) \neq 0$ for every $x \in(r_-, r)$.

Let us consider the case $1 < \sqrt{A}$ first. If $N \supset (1, \sqrt{A})$, then $I_1^+(T)=0$.  Otherwise, we have that $\sqrt{A}>s >1$.  Let $x_0 \in(s, s_+)$.   We note that \[I_1^+(T)=  e\left(- m \frac d c - j \frac a c \right)\left(\int_s^{\sqrt{A}} e^{i T p(x)} q_1(x)~\wrt x  +\int_s^{\sqrt{A}} e^{i T p(x)}q_2(x)~\wrt x \right)\] where $q_1(x):= q(x) - q(x_0)$ and $q_2(x) = q(x_0)$.  Using Taylor series, we have, as $x \rightarrow s^+$, \[-p(x)+p(s) \sim 2\pi j (A+1)\frac{s^2-1}{(s^2+1)^2}(x-s), \quad \quad q_1(x) \sim -q(x_0), \quad \quad q_2(x) \sim q(x_0).\]  We may now apply~\cite[Chapter~3, Theorem~13.2]{Olv} to obtain, as $T \rightarrow \infty$, \[I_1^+(T)= e\left(- m \frac d c - j \frac a c \right)\left(\frac{q(\sqrt{A})e^{iTp(\sqrt{A})}}{i T p'(\sqrt{A})} + \varepsilon_1(T) + \varepsilon_2(T)\right)\] where \begin{align*}
\varepsilon_1(T) :=& \frac{1}{i T}\int_s^{\sqrt{A}}e^{iTp(x)} \frac{\wrt{}}{\wrt x}\left(\frac{q_1(x)}{-p'(x)} \right)~\wrt x \\ =& -\frac{1}{4 \pi^2 i j^2 T}\int_s^{\sqrt{A}}e^{iTp(x)} \frac{\wrt{}}{\wrt x}\left(\frac{\widehat{h_2}(j, -2 \arg(x+i))e(mxy)-\widehat{h_2}(j, -2 \arg(x_0+i))e(mx_0y)}{-p'(x)} \right)~\wrt x \\ =& o\left(\frac{1}{j^2 T}\right), \\
\varepsilon_2(T) :=& \frac{1}{i T}\int_s^{\sqrt{A}}e^{iTp(x)} \frac{\wrt{}}{\wrt x}\left(\frac{q_2(x)}{-p'(x)} \right)~\wrt x \\ =& -\frac{1}{4 \pi^2 i j^2 T}\int_s^{\sqrt{A}}e^{iTp(x)} \frac{\wrt{}}{\wrt x}\left(\frac{\widehat{h_2}(j, -2 \arg(x_0+i))e(mx_0y)}{-p'(x)} \right)~\wrt x  \\=& o\left(\frac{1}{j^2 T}\right). \end{align*}  Here the three implied constants have no dependence.  Note the expressions for $\varepsilon_1(T)$ and $\varepsilon_2(T)$ follow immediately from integration by parts for $I_1^+(T)$, and the third equality for both $\varepsilon_1(T)$ and $\varepsilon_2(T)$ follow from the Riemann-Lebesgue lemma (see~\cite[Chapter~3, (13.03) and Section~13.3]{Olv}).  Consequently, we have, as $T \rightarrow \infty$, \[I_1^+(T)= O\left(\frac{\widehat{h}(j, -2\arg(\sqrt{A}+i))}{jT} \right) + o\left(\frac{1}{j^2 T}\right).\] Note that, since $A > s^2>1$, we have that $-2 \pi j < p'(\sqrt{A})< 2 \pi j \left(\frac 2 {s^2+1} -1 \right)$ is bounded away from zero.  Here the first implied constant depends on $s$ and the second implied constant has no dependence.  As $s$ is uniquely determined by $h$, we can also state that the first implied constant depends on $h$.

Next we consider the integral $I_1^-(T)$ for $\alpha$ (which, in our notation, denotes the lower integration bound) such that $-1 < \alpha < 1$ is bounded away from $-1$.  If $N \supset (\alpha, 1)$, then $I_1^-(T)=0$.  Otherwise, we have that $\alpha < r <1$.  Now, by the analogous proof for $I_1^+(T)$, we have, as $T \rightarrow \infty$,  \[I_1^-(T)= e\left(- m \frac d c - j \frac a c \right)\left(\frac{q(\alpha)e^{iTp(\alpha)}}{-i T p'(\alpha)} + \varepsilon(T) \right)\] where $\varepsilon(T) = o\left(\frac{1}{j^2 T}\right)$. Here the implied constants have no dependence.  Now, as $-Tp'(\alpha) = 2 \pi j \frac{\alpha^2-1}{yc^2(\alpha^2+1)^2}$, we have, by (\ref{eqn:RelRatesTy}), that \[I_1^-(T)= O\left(\frac{\widehat{h}(j, -2\arg(\alpha+i))}{jT} \right) + o\left(\frac{1}{j^2 T}\right)\]  as $T \rightarrow \infty$.  Here the first implied constant depends on $r$ and the second has no dependence.  As $r$ is uniquely determined by $h$, we can also say that the first constant depends on $h$.

Now, for the case $\sqrt{A} =1$, we only need to consider the integral $I_1^-(T)$ as above for the case $\sqrt{A} > 1$.

For the case $\sqrt{A} <1$, we only need to consider the integral $I_{\sqrt{A}}^-(T)$ for $\alpha$ such that $-1 < \alpha < \sqrt{A}$ is bounded away from $-1$.  If $N \supset (\alpha, \sqrt{A})$, then $I_{\sqrt{A}}^-(T)=0$.  Otherwise, we have that $\alpha < r <\sqrt{A}$.  Now, by the analogous proof to the case $\sqrt{A} > 1$, we have, as $T \rightarrow \infty$, \[I_{\sqrt{A}}^-(T)= O\left(\frac{|\widehat{h}(j, -2\arg(\alpha+i))| + |\widehat{h}(j, -2\arg(\sqrt{A}+i))|}{jT} \right) + o\left(\frac{1}{j^2 T}\right)\] where the first implied constant depends on $r$ and the second has no dependence.  

Finally, we consider when $\widehat{h}(j, \pi/2) = 0$.  As for $\widehat{h}(j, -\pi/2) = 0$, there are two cases.  The first case is that $\widehat{h}(j, - 2\arg(x+i))$ is not locally zero at $x=-1$. Thus, we have that, for every $\delta>0$, there exists $x \in (-1 -\delta, -1+\delta)$ such that $\widehat{h}(j, -2\arg(x+i)) \neq  0$.  Let $\theta_1$ be chosen such that  $\widehat{h}(j, \theta_1) \neq  0$. By the proofs analogous to when $\widehat{h}(j, -\pi/2) = 0$, we  have \[I_{-1}^{\pm}(T)=  O\left(\frac{\widehat{h}(j, \theta_1)} {\sqrt {jT}} \right)  = I_{\sqrt{A}}^{-}(T)\] as $T \rightarrow \infty$.  The implied constants have no dependence for any of these estimates.  

The second case is that $\widehat{h}(j, - 2\arg(x+i))$ is locally zero at $x=-1$.  Thus, we have that there exists a $\delta>0$ for which $\widehat{h}(j, - 2\arg(x+i)) = 0$ for every $x \in (-1 -\delta, -1+\delta)$.  Let $N$ denote the union of all open intervals $U$ containing $-1$ such that $\widehat{h}(j, - 2\arg(x+i)) = 0$ for every $x \in U$ and set $N =:(r, s)$.  By the analogous proof for when $\widehat{h}(j, -\pi/2) = 0$, we have, as $T \rightarrow \infty$, \begin{align*}
I_{-1}^-(T) &= O\left(\frac{\widehat{h}(j, -2\arg(-\sqrt{A}+i))}{jT} \right) + o\left(\frac{1}{j^2 T}\right) \\ 
I_{-1}^+(T)&= O\left(\frac{\widehat{h}(j, -2\arg(\alpha+i))}{jT} \right) + o\left(\frac{1}{j^2 T}\right) \\
I_{-\sqrt{A}}^+(T)&= O\left(\frac{|\widehat{h}(j, -2\arg(\alpha+i))| + |\widehat{h}(j, -2\arg(-\sqrt{A}+i))|}{jT} \right) + o\left(\frac{1}{j^2 T}\right) \end{align*} for $-1 < \alpha < 1$ bounded away from $1$.  For $I_{-1}^-(T)$, the first implied constant depends on $r$ and the second has no dependence.  For $I_{-1}^+(T)$ and $I_{-\sqrt{A}}^+(T)$, the first implied constant depends on $s$ and the second has no dependence. This concludes all cases and proves the desired result. \end{proof}

\begin{coro}\label{coro:MethStatPhaseForUs}  Let $j \in \ZZ \backslash \{0\}$.  We have, as $T \rightarrow \infty$, \[I(T, j)=\begin{cases}  O\left(\frac{1}{j^2T} \right)  &\textrm{ if } \widehat{h}(j, \theta) \textrm { is locally zero at both } \theta=-\frac{\pi} 2 \textrm{ and }  \theta=\frac{\pi} 2 \\ 
O\left(\frac{1} {j^2\sqrt {T}}\right) &\textrm{ otherwise}\end{cases}\] where the implied constants depend only on $h$.  
\end{coro}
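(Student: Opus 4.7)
The plan is to deduce Corollary~\ref{coro:MethStatPhaseForUs} directly from Lemma~\ref{lemm:MethStatPhaseForUs} by combining its case-by-case bounds with the rapid Fourier decay of $\widehat{h}(j,\theta)$ in $j$. Since $h$ is $C^\infty$ and periodic on $(\RR/\ZZ)\times(\RR/(2\pi\ZZ))$, repeated integration by parts in the $x$-variable of the Fourier expansion provides, for every natural number $k$, a constant $C_k$ depending only on $h$ with
\[
|\widehat{h}(j,\theta)| \leq C_k |j|^{-k} \qquad \text{uniformly in } \theta.
\]
Writing $I(T,j) = I_1(T) + I_{-1}(T)$ as in Lemma~\ref{lemm:MethStatPhaseForUs}, the corollary follows by inserting this decay into each branch of the lemma and keeping the dominant term.

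In the first case of the corollary, $\widehat{h}(j,\theta)$ is locally zero at both $\theta = -\pi/2$ and $\theta = \pi/2$, so both $I_1(T)$ and $I_{-1}(T)$ fall into the third subcase of the lemma. Summing the two bounds, $I(T,j)$ is bounded by $O(M/(jT)) + o(1/(j^2 T))$, where $M$ is a sum of values of the form $|\widehat{h}(j,\theta')|$ at a bounded number of points $\theta'$. Applying the Fourier decay estimate with $k=1$ (in fact any $k\geq 1$) gives $M = O(|j|^{-1})$, so the full expression is $O(1/(j^2T))$, matching the first line of the corollary.

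Otherwise at least one of $\theta = -\pi/2$, $\theta = \pi/2$ is not a point of local vanishing of $\widehat{h}(j,\theta)$, so the corresponding term $I_{\pm 1}(T)$ falls into the first or second subcase of the lemma and is bounded by $O(|\widehat{h}(j,\theta_*)|/\sqrt{|j|T})$ for a suitable $\theta_* \in \{\pm\pi/2, \theta_0, \theta_1\}$. The companion term is either of the same form or of the strictly smaller form of the previous paragraph (note $1/(jT) \leq 1/\sqrt{|j|T}$ for $|j|T \geq 1$). Applying the Fourier decay estimate with $k=2$ then yields
\[
I(T,j) = O\!\left(\frac{|j|^{-2}}{\sqrt{|j|T}}\right) = O\!\left(\frac{1}{j^2 \sqrt{T}}\right),
\]
using $|j|\geq 1$, which is the second line of the corollary.

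The only thing to verify is that the implied constants depend only on $h$: those furnished by Lemma~\ref{lemm:MethStatPhaseForUs} either have no dependence or depend on $h$, and the Fourier-decay constants $C_k$ depend only on $h$, so the combined implied constants depend only on $h$. I do not foresee a substantive obstacle here; the corollary is essentially a repackaging of Lemma~\ref{lemm:MethStatPhaseForUs} made uniform in the structure of the Fourier coefficients of $h$ by exploiting its smoothness.
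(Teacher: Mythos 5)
The proposal is correct and takes essentially the same approach as the paper: both insert the rapid Fourier decay of $\widehat{h}(j,\theta)$ furnished by the smoothness of $h$ (which the paper encodes via the identity $\widehat{h}(j,\theta)=\widehat{h_2}(j,\theta)/(-4\pi^2 j^2)$, i.e.\ your $k=2$ bound) into the case-wise estimates of Lemma~\ref{lemm:MethStatPhaseForUs}. Your subcase bookkeeping and the observation that all resulting implied constants depend only on $h$ are accurate.
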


\begin{proof}
 By the smoothness of $h$, we can apply $\widehat{h}(j, \theta) = \frac{\widehat{h_2}(j, \theta)}{-4 \pi^2 j^2}$ to the lemma to obtain the desired result.
\end{proof}

Applying the corollary, we have that $\sum_{j \in \ZZ\backslash \{0\}} I(T, j) = O(1/\sqrt{T})$ where the implied constant depends on $h$.  Thus, we have, as $T \rightarrow \infty$, \begin{align}\label{eqn:LargeTFourierConstDom}\int_{-\infty}^\infty & f\left(\frac a c - \frac 1{c^2 y} \frac 1 {x+i}, - 2\arg(x+i) \right) e\left(m xy- m \frac d c\right)~\wrt x \\\nonumber &= \int_{-\sqrt{A}}^{\sqrt{A}}  \left(\int_0^1h(t, -2\arg(x+i))~\wrt t\right) e\left(m xy- m \frac d c\right)~\wrt x  + O\left(\frac {H(A)}{\sqrt{T}}\right)
  \end{align} where \[H(A):= \begin{cases} 1 & \text{ if } A>0 \\ 0 &\text{ if } A =0  \end{cases}\] and the implied constant depends on $h$.  This finishes the estimate for the integral when $A \geq 0$, and, thus, in all cases.  Note, in particular, for $A=0$, the all the integrals we considered are zero including those which give the error term.
  
  Now the number of terms in the sum over the double cosets is counted by a suitable Kloosterman sum (see~\cite[(2.24)]{Iwa02}).  Using the trivial bound over the average of the Kloosterman sum, namely~\cite[(2.38)]{Iwa02}, and H\"older's inequality, we have \begin{align*} y  \sum_{\substack{\Gamma_\infty\begin{pmatrix}  a & b \\ c & d \end{pmatrix} \Gamma_\infty \in \Gamma_\infty \backslash \sigma_j^{-1}\Gamma/\Gamma_\infty \\  \quad \frac 1 {\sqrt{Ty}} \geq c>0}}  \frac 1 {\sqrt{T}} \leq C\frac{y}{Ty \sqrt{T}} = C \frac 1{T^{3/2}}
  \end{align*} where $C$ is a constant.\footnote{Our technique of using the trivial bound over the average of the Kloosterman sum here and elsewhere could be replaced by a more precise estimate~\cite[Theorem~4]{Goo83}:\[\sum_{\substack{\Gamma_\infty\begin{pmatrix}  a & b \\ c & d \end{pmatrix} \Gamma_\infty \in \Gamma_\infty \backslash \sigma_j^{-1}\Gamma/\Gamma_\infty \\  \quad \frac 1 {\sqrt{Ty}} \geq c>0}} 1 = \frac{2}{\mu(\Gamma\backslash G)}(Ty)^{-1} +O\left((Ty)^{-\max\{s_1, 2/3\}}\right).\]  This shows that $C=\frac{2}{\mu(\Gamma\backslash G)}$.  For the definition of $s_1$, see Section~\ref{sec:HorocycleEquiFixedFunc}.}   Note that terms corresponding to $c > \frac 1 {\sqrt{Ty}}$ in the double coset summation are identically zero.  This yields the desired error term.

  Finally, as $T >y$, we have that \[\int_0^1 f(x+iy, 0) \ e(m x)~\wrt x =0.\]  Apply Lemma~\ref{lemm:DoubleCosetIntegral} yields the desired result for $\phi^{(\kappa_j)}_{T,0}$.
  
We now consider the case $\eta>0$.  We claim, for this case, that there is an additional error term $O\left(\frac{\sqrt{\eta}}{T^{3/2}}\right)$, which, thus, is negligible.  Here, the implied constant depends on $h$, $\kappa_1$, and $\kappa_j$.  Let $f_\eta := f_{T,\eta}$ and $g := f - f_\eta$.  Similar to the case $\eta =0$, the $c=0$ term is zero.  Let $A_\eta:=\frac1{(T-\eta)yc^2} -1$.  For any other term, we compute \begin{align*}\left|\int_{-\infty}^\infty g\left(\frac a c - \frac 1{c^2 y} \frac 1 {x+i}, - 2\arg(x+i) \right) e\left(m xy- m \frac d c\right)~\wrt x \right| \\ \leq 2 M \begin{cases}  \sqrt{A_\eta} - \sqrt{A} & \textrm{ if } A > \frac \eta T \\ \sqrt{A_\eta} & \textrm{ if } A \leq \frac \eta T \textrm{ and } A_\eta \geq 0\\
0 & \textrm{ if } A_\eta < 0 \end{cases}\end{align*}  where \[M := \max \{|h(x, \theta)| : x \in [0,1] \textrm { and } \theta \in [0, 2\pi]\}.\]  We note that $A_\eta > A$ here.

Let us consider first $A > \frac{\eta} {T}$.  From this, it follows that $\frac 1 {Tyc^2} \frac T {T +\eta} > 1$ and, thus, $\sqrt{A} > \frac{\sqrt{\eta}}{\sqrt{2} T \sqrt{y} c}$.  Now we have \begin{align*}
\sqrt{A_\eta} - \sqrt{A} = \frac{A_\eta - A}{\sqrt{A_\eta} +\sqrt{A}} \leq \frac{\eta}{2\sqrt{A} T(T-\eta) y c^2} \leq \frac{4\sqrt{\eta}}{T\sqrt{y} c}.\end{align*}

For $A \leq \frac{\eta} {T}$ and $A_\eta \geq 0$, we have that $A_\eta \leq \frac{2 \eta}{T-\eta}$ and $1 \leq \frac 1{\sqrt{T-\eta} \sqrt y c}$, from which it follows that $\sqrt{A_\eta} \leq \frac{4\sqrt{\eta}}{T\sqrt{y} c}$.

Now any term for which $c> \frac 1 {\sqrt{(T-\eta) y}}$ is zero.  Using the trivial bound over the average of the Kloosterman sum (\cite[(2.38)]{Iwa02}), we have that the additional error term is $ y \frac{4\sqrt{\eta}}{T\sqrt{y}} \frac 1 {\sqrt{(T-\eta) y}} = O\left(\frac{\sqrt{\eta}}{T^{3/2}}\right)$, as desired.  Finally, for the case $\eta <0$, the analogous proof gives an additional error of $O\left(\frac{\sqrt{|\eta|}}{T^{3/2}}\right)$.  This completes the proof of Proposition~\ref{prop:MovingToFixed}.  \end{proof}

\section{Renormalizing a shrinking neighborhood to a fixed neighborhood}\label{sec:RenormShrinkNeigh}  In this section, we prove Proposition~\ref{prop:MovingToFixed2}, which, roughly speaking, allows us to renormalize a neighborhood, namely the support of our test function $\phi^{(\kappa_j)}_{T, \eta}$, shrinking into the cusp $\kappa_j$ as $T \rightarrow \infty$ to a fixed neighborhood, namely the support of a suitable auxiliary function.

\begin{prop}\label{prop:MovingToFixed2}  Let $j \in \{1, \cdots, q\}$, $\frac 1 2 >\delta>0$, $|\eta| \leq \min\left(\frac{B_1-B_0}{2}, \frac 1 4 \right)$, $T -\min\left(\frac{B_1-B_0}{2}, \frac 1 4 \right)  >y$, $0<Ty<B^2_1-\frac 1 4$, and $m \in \ZZ$ be such that $|m| \leq \left(\frac{Ty}{B_1}\right)^{-1/2+\delta}$.  Then, as $T \rightarrow \infty$ and $Ty \rightarrow 0$, we have \begin{align*}
T \int_0^1 & \phi^{(\kappa_j)}_{T, \eta}(x+iy, 0) e(mx)~\wrt x \\&=  B_1\left(1 + O\left(\frac{Ty}{B_1}\right)^{\delta} \right) \int_0^1 \phi^{0, (\kappa_j)}_{\widetilde{\eta}}\left(x+i\frac{T}{B_1}y, 0\right) e(mx)~\wrt x    + O\left(\frac {1}{\sqrt{T}}\right) \end{align*} where $\widetilde{\eta}$ is a real number such that $|\widetilde{\eta}| \leq \min\left(\frac{B_1-B_0}{2}, \frac 1 4 \right)$.   Here, the first implied constant depends on $\kappa_1$ and $\kappa_j$ and the second depends on $h$, $\kappa_1$, and $\kappa_j$.   
 
\end{prop}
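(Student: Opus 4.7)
The plan is to expand both sides as double-coset sums and match them coset-by-coset. Applying Proposition~\ref{prop:MovingToFixed} and multiplying by $T$ gives
\begin{align*}
T \int_0^1 \phi^{(\kappa_j)}_{T,\eta}(x+iy,0)\, e(mx)\,dx = Ty \sum_{c > 0} \int_{-\sqrt{B}}^{\sqrt{B}} \left(\int_0^1 h(t, -2\arg(x+i))\,dt\right) e\!\left(mxy - m\tfrac{d}{c}\right) dx + O\!\left(\tfrac{1}{\sqrt{T}}\right),
\end{align*}
where $B = \max\{1/(Tyc^2)-1, 0\}$ and the sum runs over the double cosets in $\Gamma_\infty \backslash \sigma_j^{-1}\Gamma / \Gamma_\infty$ with $c > 0$. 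For the right-hand side, set $y' := (T/B_1)y$ and apply Lemma~\ref{lemm:DoubleCosetIntegral} to $\phi^{0,(\kappa_j)}_{\widetilde{\eta}}$ at $(x+iy', 0)$. The key algebraic observation is that the $\mathfrak{m}_{T/B_1}$ in the definition of $f^0_{\widetilde{\eta}}$ cancels the $y'$ arising from the double-coset transformation: for each representative with $c>0$,
\begin{equation*}
f^0_{\widetilde{\eta}}\!\left(\tfrac{a}{c} - \tfrac{1}{c^2 y'(x+i)}, \theta\right) = \ind_{[T,\infty), \widetilde{\eta}}\!\left(\tfrac{1}{c^2 y(x^2+1)}\right)\int_0^1 h(t,\theta)\,dt.
\end{equation*}
Thus the effective range of integration is essentially $|x|\le\sqrt{B}$, up to a smoothing region of width $O(|\widetilde{\eta}|/T)$. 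The $\delta_{1j}$-diagonal term in Lemma~\ref{lemm:DoubleCosetIntegral} vanishes because $Ty < B_1^2 - 1/4$ (together with $|\widetilde{\eta}|\leq 1/4$) keeps $(T/B_1)y'=(T^2/B_1^2)y$ below the support of $\ind_{[T,\infty),\widetilde{\eta}}$ once $T$ is large. Multiplying through by $B_1$ and using $B_1 y' = Ty$ yields
\begin{align*}
B_1 \int_0^1 \phi^{0,(\kappa_j)}_{\widetilde{\eta}}(x+iy', 0)\, e(mx)\,dx = Ty \sum_{c > 0} \int_{-\sqrt{B}}^{\sqrt{B}} \left(\int_0^1 h(t, -2\arg(x+i))\,dt\right) e\!\left(mxy' - m\tfrac{d}{c}\right) dx + O\!\left(\tfrac{1}{\sqrt{T}}\right),
\end{align*}
where the smoothing error is absorbed into $O(1/\sqrt{T})$ by the same Kloosterman-average bookkeeping as at the end of the proof of Proposition~\ref{prop:MovingToFixed}.

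The two double-coset sums now differ only through the phase, $e(mxy)$ versus $e(mxy')$. On the effective support $|x|\le\sqrt{B}\le 1/(c\sqrt{Ty})$, together with $|y'-y|=((T-B_1)/B_1)y\le Ty/B_1$ and the hypothesis $|m|\le (Ty/B_1)^{-1/2+\delta}$, one computes
\begin{equation*}
|mx(y-y')| \le \tfrac{|m|\sqrt{Ty}}{cB_1} \le \tfrac{1}{c\sqrt{B_1}}\left(\tfrac{Ty}{B_1}\right)^{\delta},
\end{equation*}
so that $|e(mxy)-e(mxy')| = O\!\left(c^{-1}(Ty/B_1)^{\delta}\right)$ pointwise on the integration domain. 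Integrating this estimate over $[-\sqrt{B},\sqrt{B}]$ and summing in $c$ via the trivial Kloosterman bound on average \cite[(2.38)]{Iwa02} (already used in the proof of Proposition~\ref{prop:MovingToFixed}) shows that the difference of the two double-coset sums is $O\!\left((Ty/B_1)^{\delta}\right)$ times the second of them, with implied constant depending only on $\kappa_1$ and $\kappa_j$. Combining this with the two displays above produces the claimed identity.

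The main technical obstacle is the bookkeeping of the $\ind_{[T,\infty),\widetilde{\eta}}$-smoothing that enters in the second step: after the rescaling, the effective smoothing width in the $y$-direction becomes $|\widetilde{\eta}|B_1/T$, and one must verify that the corresponding deviation in the $x$-integration bounds propagates to an error of size $O(1/\sqrt{T})$ after summing over double cosets. This mirrors verbatim the treatment of the $\eta\ne 0$ case at the end of the proof of Proposition~\ref{prop:MovingToFixed}, and the dependence of constants on $h,\kappa_1,\kappa_j$ is tracked identically.
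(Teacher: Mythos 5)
Your overall strategy — expand both sides via Lemma~\ref{lemm:DoubleCosetIntegral}, observe that $\mathfrak{m}_{T/B_1}$ cancels the rescaled $y'=(T/B_1)y$ so that the $f^0$-terms live on the same $x$-ranges $[-\sqrt B,\sqrt B]$ as the $f_T$-terms, kill the $c=0$ contributions, and control the $\widetilde\eta$-smoothing by repeating the $\eta\neq 0$ bookkeeping from Proposition~\ref{prop:MovingToFixed} — is exactly the paper's. The phase estimate $|mx(y-y')|\leq u^\delta/(c\sqrt{B_1})$ with $u:=Ty/B_1$ is also correct and of the same order as what the paper uses.

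Where your write-up diverges from the paper, and where the gap sits, is the final step. You bound the \emph{difference} $|e(mxy)-e(mxy')|=O(c^{-1}u^{\delta})$, integrate over $[-\sqrt B,\sqrt B]$, and sum over $c$. Carried out honestly, that computation produces an \emph{additive} bound of roughly $O\!\left(u^{\delta+1/2}\log(1/u)\right)$ on $\left|T\int\phi_T - B_1\int\phi^0\right|$, not the stated conclusion. You then assert that this difference ``is $O(u^\delta)$ times the second of them,'' but that multiplicative form does not follow: an additive bound cannot be repackaged as $O(u^\delta)\cdot B_1\left|\int\phi^0\right|$ without a lower bound on $\left|\int\phi^0 e(mx)\right|$, which is not available (for $m\neq 0$ this integral can be arbitrarily small or zero), nor is $u^{\delta+1/2}\log(1/u)$ always $O(1/\sqrt T)$ so that it could be absorbed into the second error term. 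The paper avoids this by never passing through the difference of phases: it shows that \emph{each} phase $e(mxy)$ and $e(mx(T/B_1)y)$ is $1+O(u^\delta)$ on the support, writes both the $f_T$-term and the $f^0$-term of a given double coset as $(1+O(u^\delta))$ times the \emph{same} quantity $e(-md/c)\int_{-\sqrt B}^{\sqrt B}\left(\int_0^1 h\right)\,\wrt x$, and thereby gets a term-by-term multiplicative equality that it then sums. To close your argument, replace the difference-of-phases step by this common-factorization step (or explain why your additive bound is acceptable in place of the proposition's multiplicative error), rather than asserting the multiplicative form.
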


\begin{proof}  We will show the desired result by equating each term of the summation over the double coset decomposition given by Lemma~\ref{lemm:DoubleCosetIntegral}.  It is known that the decomposition is uniquely determined by $c$ and $d(\bmod c)$~\cite[Theorem~2.7]{Iwa02}.  We first prove the proposition for $\phi_T:= \phi_{T, 0}^{(\kappa_j)}$ and $\phi^0:= \phi^{0,(\kappa_j)}_0$.  Let $f_T := f_{T, 0}$ and $f^0:=f^0_0$.  Proposition~\ref{prop:MovingToFixed} gives the desired expression for \[\int_0^1 \phi_T(x+iy, 0) e(mx)~\wrt x.\]   

We now compute the desired expression for \[\int_0^1 \phi^0\left(x+i\frac{T}{B_1}y, 0\right) e(mx)~\wrt x.\]  As $Ty<B^2_1-\frac 1 4$, we have that the $c=0$ term is equal to zero, as desired.   Now let $c >0$ and consider the term in the summation determined uniquely by $c$ and $d(\mod c)$.  Let $B := \max\{1/Tyc^2 -1, 0\}$.  We have that \begin{align*}
\frac{T}{B_1}y\int_{-\infty}^\infty  &f^0\left(\frac a c - \frac {B_1}{c^2 Ty} \frac 1 {x+i}, - 2\arg(x+i) \right) e\left(m x\frac{T}{B_1}y- m \frac d c\right)~\wrt x \\  &=\frac{T}{B_1}y \int_{-\sqrt{B} }^{\sqrt{B}}  \left(\int_0^1h(t, -2\arg(x+i))~\wrt t\right) e\left(m x \frac{T}{B_1}y- m \frac d c\right)~\wrt x 
 \\ & = \frac{T}{B_1}y \left(1 + O\left(\frac{Ty}{B_1}\right)^{\delta} \right)e\left(- m \frac d c\right)  \int_{-\sqrt{B} }^{\sqrt{B}}  \left(\int_0^1h(t, -2\arg(x+i))~\wrt t\right) ~\wrt x \end{align*} as $Ty \rightarrow 0$.  Note that, over the integration bounds, we have that $|2 \pi i m x \frac{T}{B_1} y| \leq 2 \pi \left(\frac{Ty}{B_1}\right)^{\delta}$.  Here, the implied constant has no dependence.
 
By Proposition~\ref{prop:MovingToFixed}, the corresponding term in the double coset decomposition for $\phi_T$ is \begin{align*}
y\int_{-\infty}^\infty  f_T& \left(\frac a c - \frac 1{c^2 y} \frac 1 {x+i}, - 2\arg(x+i) \right) e\left(m xy- m \frac d c\right)~\wrt x \\ &=y\int_{-\sqrt{B} }^{\sqrt{B}}  \left(\int_0^1h(t, -2\arg(x+i))~\wrt t\right) e\left(m xy- m \frac d c\right)~\wrt x  
\\ &=y\left(1 + O\left(\frac{Ty}{B_1}\right)^\delta \right)e\left(- m \frac d c\right)\int_{-\sqrt{B} }^{\sqrt{B}}  \left(\int_0^1h(t, -2\arg(x+i))~\wrt t\right)~\wrt x\end{align*}  as $T \rightarrow \infty$.  Here, the implied constant depends on $\kappa_1$ and $\kappa_j$.  For $B >0$, (\ref{eqn:NonZeroCond}) in the proof of Proposition~\ref{prop:MovingToFixed} applies.  Consequently, over the integration bounds $-\sqrt{B}$ to $\sqrt{B}$, we have $|2 \pi i m x y| \leq O\left(\frac{\sqrt{B_1}}{T}\left(\frac{Ty}{B_1}\right)^\delta\right) \leq O\left(\frac{Ty}{B_1}\right)^\delta$, where the implied constants depend on $\kappa_1$ and $\kappa_j$.   For $B=0$, this term, the corresponding term in the other summation, and the error term from Proposition~\ref{prop:MovingToFixed} for $\phi_{T, 0}^{(\kappa_j)}$ are all equal to zero.

Thus, as $T \rightarrow \infty$ and $Ty \rightarrow 0$, we have that \begin{align*}
\frac{T}{B_1}y\int_{-\infty}^\infty & f_T \left(\frac a c - \frac 1{c^2 y} \frac 1 {x+i}, - 2\arg(x+i) \right) e\left(m xy- m \frac d c\right)~\wrt x  \\ & = \frac{T}{B_1}y\left(1 + O\left(\frac{Ty}{B_1}\right)^{\delta} \right) \\ & \quad \quad \int_{-\infty}^\infty  f^0 \left(\frac a c - \frac {B_1}{c^2 Ty} \frac 1 {x+i}, - 2\arg(x+i) \right) e\left(m x\frac{T}{B_1}y- m \frac d c\right)~\wrt x \end{align*} where the implied constant depends on $\kappa_1$ and $\kappa_j$.  Adding in the error term of $O\left(\frac 1 {B_1\sqrt{T}}\right)$, we have, thus, shown the desired equality of each corresponding term in the double coset decompositions.  Here, the implied constant depends on $h$, $\kappa_1$, and $\kappa_j$.  This proves the desired result when $\eta =0 =\widetilde{\eta}$.

For $\eta \neq 0$, we must add another error term of $O\left(\frac{\sqrt{|\eta|}}{B_1 \sqrt{T}}\right)$ as shown in the proof of Proposition~\ref{prop:MovingToFixed}.  For $\widetilde{\eta} \neq 0$, we must add in yet another error term, namely $O\left(\frac{\sqrt{|\widetilde{\eta}|}}{B_1 \sqrt{T}}\right)$.  This follows because the functions \[\ind_{[T- \widetilde{\eta}, \infty)}\circ\mathfrak{m}_{\frac T {B_1}}\left(\frac{B_1}{Tyc^2(x^2+1)} \right) \quad \quad \ind_{[T- \widetilde{\eta}, \infty)}\left(\frac{1}{yc^2(x^2+1)} \right)\] are the same.  Then applying the proof of the additional error term in Proposition~\ref{prop:MovingToFixed} (with the $\eta$ from the proposition replaced by $\widetilde{\eta}$) yields the desired error term.  Both of the implied constants depend on $h$, $\kappa_1$, and $\kappa_j$.  This proves the desired result in all cases.  \end{proof}

\section{Horocycle equidistribution for fixed functions}\label{sec:HorocycleEquiFixedFunc}  In this section, we give a variant of a general formulation of horocycle equidistribution for fixed functions due to Str\"ombergsson.  There are two formulations, an effective one and a non-effective one.   Str\"ombergsson's results are for continuous functions, while, in our setting, the non-effective version also needs to handle functions that are not continuous, as these functions involve indicator functions.  A simple approximation is needed to prove our  variant of the non-effective result (Lemma~\ref{lemm:HoroDistTanBun}), namely use the smooth Urysohn lemma (see~\cite[Lemma~2.1.17]{Muk15} for example) to give an upper and lower approximation for the indicator function and apply Str\"ombergsson's results.  For the convenience of the reader, we will give the details in the proof of Lemma~\ref{lemm:HoroDistTanBun} below.

First, let us prove our variant of the effective result, Lemma~\ref{lemm:EffectHoroDistTanBun}.  Let $\Delta$ be the Laplacian on $\Gamma \backslash \HH$.  If there exist small eigenvalues $\lambda \in (0, \frac 1 4)$ in the discrete spectrum of $\Delta$, let $\lambda_1$ be the smallest.  Likewise, if it exists, let $\lambda^{(j)}_1 \in [\lambda_1, \frac 1 4)$ be the smallest positive eigenvalue for which there exists an eigenfunction which has non-zero constant term (see~\cite[Chapter~7]{Bor97}) at cusp $\kappa_j$.  Define \begin{align}\label{eqn:SmallEigenvalues}
s_1 &:= \begin{cases} \frac{1 + \sqrt{1-4 \lambda_1}}2 &\textrm{ if there exists small eigenvalues} \\
\frac 1 2 &\textrm{ if there does not exist small eigenvalues}  \end{cases} \\\nonumber
s_1^{(j)} &:= \begin{cases} \frac{1 + \sqrt{1-4 \lambda^{(j)}_1}}2 &\textrm{ if } \lambda^{(j)}_1 \textrm{ exists } \\
\frac 1 2 &\textrm{ if } \lambda^{(j)}_1 \textrm{ does not exist } 
 \end{cases}\\\nonumber
 s_1' &:= \max\left(s_1^{(1)}, \cdots, s_1^{(q)}\right).
 \end{align}  Note that $\frac 1 2 \leq s'_1 \leq s_1<1$.
 
 Our variant of the results in~\cite{Str13} is the following:

\begin{lemm}\label{lemm:EffectHoroDistTanBun} Let $j \in \{1, \cdots, q\}$, $\delta>0$, and $|\eta|>0$.  Then \begin{align*}
\frac 1 {\beta - \alpha} \int_\alpha^\beta & \phi^{0, (\kappa_j)}_{\eta}(x+iy, 0)~\wrt x 
 = \frac{T}{B_1\mu(\Gamma \backslash G)} \int_0^{2\pi}\int_0^1\int_0^\infty \ind_{[T,\infty), \eta}(y) h(x, \theta) \frac{\wrt y~\wrt x~\wrt\theta}{y^2}
   \\ &+ O(B_1^{-1}T^4|\eta|^{-4}) \left(\frac{y^{1/2}}{\beta - \alpha} \log^2 \left(\frac{\beta-\alpha}{y}\right) + \frac{y^{1-s'_1}}{(\beta-\alpha)^{2(1-s'_1)}}+\left(\frac{y}{\beta-\alpha} \right)^{1-s_1}\right)\end{align*} where the implied constant depends on $\Gamma, \kappa_1, \kappa_j$ and $h$. 
 
\end{lemm}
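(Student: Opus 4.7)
The plan is to apply Str\"ombergsson's effective horocycle equidistribution theorem~\cite{Str13} directly to the auxiliary function $\phi^{0,(\kappa_j)}_{\eta}$, which is genuinely smooth because $\eta\neq 0$. No Urysohn-style approximation is needed here, in contrast to the non-effective Lemma~\ref{lemm:HoroDistTanBun}. The test function has, via the cusp identification $\sigma_j$, support concentrated in the strip $y\in[B_1(1-|\eta|/T),\infty)$ with $\theta$-dependence entirely through the fixed $h$, and its $y$-dependence is a rescaled mollifier at scale $|\eta|B_1/T$.

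First I would compute the mean value explicitly. Unfolding the sum defining $\phi^{0,(\kappa_j)}_{\eta}$ against the fundamental domain $\overline F\times[0,2\pi)$ and using (\ref{eqn:FullCuspInY}) reduces the integral to a single integral over the standard cusp:
\[\int_{\Gamma\backslash G}\phi^{0,(\kappa_j)}_{\eta}\,\wrt\mu = \int_0^{2\pi}\!\int_0^1\!\int_0^\infty \ind_{[T,\infty),\eta}\!\left(\tfrac{T}{B_1}y\right) h(x,\theta)\,\frac{\wrt y\,\wrt x\,\wrt\theta}{y^2}.\]
The change of variables $u=Ty/B_1$ then produces the constant-in-$(\alpha,\beta,y)$ main term stated in the lemma.

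Next I would quote Str\"ombergsson's effective bound: for a suitably smooth $\psi$ on $\Gamma\backslash G$ in an appropriate $G$-invariant Sobolev class of order $k$, the deviation of the horocycle average from the hyperbolic mean is controlled by an appropriate Sobolev norm of $\psi$ times
\[\frac{y^{1/2}}{\beta-\alpha}\log^2\tfrac{\beta-\alpha}{y} + \frac{y^{1-s'_1}}{(\beta-\alpha)^{2(1-s'_1)}} + \left(\tfrac{y}{\beta-\alpha}\right)^{1-s_1},\]
the three summands arising from the tempered spectrum, the small eigenvalues with non-zero constant term at some cusp, and the remaining small eigenvalues respectively. Applying this bound with $\psi=\phi^{0,(\kappa_j)}_{\eta}$ reduces the lemma to a norm estimate.

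The final and most delicate step, which is the main obstacle, is the Sobolev norm bound $\ll B_1^{-1} T^4 |\eta|^{-4}$. Each $y$-derivative of $\ind_{[T,\infty),\eta}\circ\mathfrak m_{T/B_1}$ contributes a factor of $T/(B_1|\eta|)$ via the chain rule and the scale of the mollifier, while $x$- and $\theta$-derivatives contribute only $h$-dependent constants; for $k=4$ this yields the $T^4|\eta|^{-4}$ factor. The residual $B_1^{-1}$ must come from combining these chain-rule powers of $B_1^{-1}$ with the positive powers of $B_1\sim y$ absorbed by the $G$-invariant derivatives $y\partial_x$, $y\partial_y$ entering Str\"ombergsson's norm (together with the thin support strip of width $\sim|\eta|B_1/T$ in the cusp measure $\wrt y/y^2$). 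Matching the precise order $k$ and the exact form of the norm in~\cite{Str13} to the explicit structure of $\phi^{0,(\kappa_j)}_{\eta}$, so that the exponents on $B_1$, $T$, and $|\eta|$ land at $(-1,4,-4)$, is the essential bookkeeping that remains.
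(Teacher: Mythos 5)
Your plan follows exactly the route the paper takes: apply Str\"ombergsson's effective horocycle equidistribution, namely~\cite[Proposition~3.1]{Str13} (see also Remark~3.4 there), directly to the smooth automorphic function $f:=\phi^{0,(\kappa_j)}_{\eta}$, unfold the mean to get the stated main term, and then reduce the error to a Sobolev norm estimate $\|f\|_{W_4}\ll T^4B_1^{-1}|\eta|^{-4}$. The one genuine gap is that you explicitly defer the $W_4$ estimate as ``the essential bookkeeping that remains,'' but this is precisely the substantive content of the proof, so the proposal as written is incomplete rather than wrong.

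For the record, the paper carries out that bookkeeping as follows. Writing the Sobolev norm in terms of iterated Lie derivatives $\mathfrak{L}_{X_l}\cdots\mathfrak{L}_{X_1}f$ of order $\leq 4$ for the standard basis $H$, $X_\pm$, it observes that since $f$ is independent of $x$, every such monomial is a finite sum of terms of the form $g(\theta)\,y^i\,\partial_y^i\partial_\theta^j f$ with $i+j\leq 4$ and $g$ a fixed $C^\infty$ function of $\theta$. The terms with $i\geq 1$ are supported in $y\in[B_1-|\eta|, B_1+|\eta|]$, and the chain rule together with a Young/mollifier bound gives $\|\partial_y^i\bigl(\ind_{[T,\infty),\eta}\circ\mathfrak m_{T/B_1}\bigr)\|_\infty\ll T^iB_1^{-i}|\eta|^{-(i+1)}$; integrating $y^{2i-2}$ over that strip against $\wrt y/y^2$ then yields a contribution $\ll T^iB_1^{-3/2}|\eta|^{-i}\leq T^4B_1^{-3/2}|\eta|^{-4}$. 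The $i=0$ terms are supported on the full tail $y\geq B_1-|\eta|$ and contribute $O(B_1^{-1})$, and these are absorbed into $O(T^4B_1^{-1}|\eta|^{-4})$. Two small points worth noting relative to your sketch: (i) the relevant $y$-support of the derivatives is even thinner than you say in one place and wider in another --- the transition band has width $\sim B_1|\eta|/T$, but the paper works with the cruder superset $[B_1-|\eta|, B_1+|\eta|]$, which suffices; (ii) the residual $B_1^{-1}$ in the final constant is not produced by the thin strip acting against the Lie-derivative $y$-powers (those essentially cancel), but rather is dominated by the $i=0$ tail contribution through the cusp measure $\wrt y/y^2$. Neither changes the conclusion, but they are worth getting straight before filling in the details.
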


\begin{rema}  The main term is bounded for all $T>\frac 1 4$.   For fixed $T$, the error term goes uniformly to $0$ as $y \rightarrow 0$ so long as $1 \geq \beta - \alpha \geq y^{1/2 - \delta}$.  Moreover, the implied constant in the error term is effective, meaning that, in principle, it can be computed from the proof of~\cite[Proposition~3.1]{Str13} and the proof of the lemma.

\end{rema}
\begin{proof}
 The proof is applying~\cite[Proposition~3.1]{Str13} (c.f.~\cite[Remark~3.4]{Str13}) to $f:=\phi^{0, (\kappa_j)}_{\eta}$.  The details are as follows.  Let \[\boldsymbol{n}(t) := \begin{pmatrix} 1 & t \\ 0 & 1  \end{pmatrix} \quad \boldsymbol{a}(y):=\begin{pmatrix} y^{1/2} & 0 \\ 0 & y^{-1/2} \end{pmatrix}.\]  As $\boldsymbol{n}(\alpha) \boldsymbol{a}(y)\boldsymbol{n}(t)i= yt+\alpha +i y$ by(\ref{eqn:MobiusAct}), we have \[\frac 1 T \int_0^Tf(\boldsymbol{n}(\alpha) \boldsymbol{a}(y)\boldsymbol{n}(t)) = \frac 1 {\beta-\alpha} \int_\alpha^\beta f(x+iy,0)~\wrt x\] and $\beta-\alpha = yT$.  
 
To finish, it remains only to estimate the Sobolev norm $\|f\|_{W_4}$.  We first show that it is finite.  Let $\pi$ denote the right regular representation of $G$ on $L^2(\Gamma \backslash G)$, $\mathfrak{g}$ denote the Lie algebra of $\SL_2(\RR)$, and $\mathcal{U}(\mathfrak{g})$ denote the universal enveloping algebra of $\mathfrak{g}$.  Then the action of $\mathfrak{g}$ on the smooth functions of $L^2(\Gamma \backslash G)$ is by the (left) Lie derivative, and, for a fixed basis $X_1, X_2, X_3$ of $\mathfrak{g}$ and all $k \in \NN$, the Sobolev norm $\|f\|_{W_k}$ is equivalent to the norm given by $\sum\|\pi(X_\zeta)f\|$ where the sum runs over all monomials $X_\zeta = X_{i_1} X_{i_2} \cdots X_{i_l} \in \mathcal{U}(\mathfrak{g})$ of degree $\leq k$ and the norm $\|\cdot\|$ is the $L^2$-norm (see~\cite[Section~2]{Str13},~\cite[Section~3.2]{BR99}, and~\cite[Chapter~14]{Bor97}).

A standard basis for $\mathfrak{g}$ is \[H:=\begin{pmatrix} 1 & 0 \\ 0 & -1  \end{pmatrix} \quad X_-:=\begin{pmatrix} 0 & 0 \\ 1 & 0  \end{pmatrix} \quad X_+:=\begin{pmatrix} 0 & 1 \\ 0 & 0  \end{pmatrix}\] and the associated Lie derivatives to these basis elements are~\cite[Chapter~IV, \S 4]{Lan85} \begin{align*} &\mathfrak{L}_H= -2y \sin 2\theta \frac{\partial}{\partial x}+2y \cos 2\theta \frac{\partial}{\partial y} + \sin 2 \theta \frac{\partial}{\partial \theta} \\
&\mathfrak{L}_{X_+}= y \cos 2\theta \frac{\partial}{\partial x}+y \sin 2\theta \frac{\partial}{\partial y} + \sin^2 \theta \frac{\partial}{\partial \theta}\\
&\mathfrak{L}_{X_-}= y \cos 2\theta \frac{\partial}{\partial x}+y \sin 2\theta \frac{\partial}{\partial y} - \cos^2 \theta \frac{\partial}{\partial \theta}.
  \end{align*}
  
 As $f$ is constant in $x$ and $\frac{\partial^i} {\partial y^i} \left(\ind_{[T, \infty), \eta}(\mathfrak{m}_{T/B_1}(y))\right)$ has support in $[B_1-|\eta|, B_1+|\eta|]$ for every $i \in \NN$, any term of $\mathfrak{L}_{X_l} \cdots \mathfrak{L}_{X_1} f$ having $y$ as a factor must have support in $[B_1-|\eta|, B_1+|\eta|]$ for the $y$-variable by elementary calculus. Likewise, any term not having $y$ as a factor must have support in $[B_1-|\eta|, \infty)$ for the $y$-variable.  Here, for $1\leq j \leq l$, $X_j$ is one of the standard basis elements of $\mathfrak{g}$.  Recalling that $~\wrt \mu = y^{-2}~\wrt x~\wrt y~\wrt \theta$, we have that $\mathfrak{L}_{X_l} \cdots \mathfrak{L}_{X_1} f \in L^2(\Gamma \backslash G)$.  Consequently, $\|f\|_{W_k} < \infty$.

  Also note that, by elementary calculus, every term of $\mathfrak{L}_{X_l} \cdots \mathfrak{L}_{X_1} f$ is of the form $g(\theta) y^i \frac{\partial^{i+j} f}{\partial y^i \partial \theta^j}$ for some $(i,j) \in (\NN \cup \{0\}) \times (\NN \cup \{0\})$ and some $C^\infty$-function $g(\theta)$.

We now estimate $\|f\|_{W_k}$.  By the above, we need only consider a finite number of monomials of the form $\mathfrak{L}_{X_l} \cdots \mathfrak{L}_{X_1} f$ where $1 \leq l \leq k$, and, thus, only a finite number of terms of the form $g(\theta) y^i \frac{\partial^{i+j} f}{\partial y^i \partial \theta^j}$ where $1\leq i+j \leq k$.  By the above, terms for which $i \geq 1$ have support in $[B_1-|\eta|, B_1+|\eta|]$.  Applying Young's inequality (as in the proof of Lemma~\ref{lemm:LipschitzConstChi}) and the chain rule, we have $\|\frac{\partial^i} {\partial y^i} \left(\ind_{[T, \infty), \eta}(\mathfrak{m}_{T/B_1}(y))\right)\|_\infty \leq O(T^i B_1^{-i} |\eta|^{-(i+1)})$ and, consequently, the contribution to $\|f\|_{W_k}$ from these terms are bounded by $O(T^i B_1^{-3/2}|\eta|^{-i}) \leq O(T^k B_1^{-3/2}|\eta|^{-k})$.  (Note that, here, $(B_1+|\eta|)^{i-1/2} - (B_1-|\eta|)^{i-1/2}$ contributes a factor of $B_1^{i-3/2}|\eta|$.)  The contribution from the terms for which $i=0$ are bounded by $O(B_1^{-1})$.  Consequently, $\|f\|_{W_k} \leq O(T^kB_1^{-1}|\eta|^{-k})$.  Here, all of the implied constants depend on $h$.  \end{proof}

Our variant of~\cite[Theorem~2]{St} is the following:
\begin{lemm} \label{lemm:HoroDistTanBun}  Let $j \in \{1, \cdots, q\}$, $\delta>0$ and fix $A > B_0> 1$.  Then \[\frac 1 {\beta - \alpha}\int_\alpha^\beta \phi^{(\kappa_j)}_{A,0}(x+iy, 0)~\wrt x \rightarrow \frac 1{\mu(\Gamma \backslash G)} \int_{\Gamma \backslash G}
\phi^{(\kappa_j)}_{A,0}(p)~\wrt \mu(p)   
 = \frac{1}{A\mu(\Gamma \backslash G)} \int_0^{2\pi}\int_0^1 h(x, \theta)~\wrt x~\wrt\theta\] uniformly as $y \rightarrow 0$ so long as $\beta - \alpha$ remains bigger than $y^{1/2 - \delta}$.
 
\end{lemm}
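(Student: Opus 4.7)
The function $\phi^{(\kappa_j)}_A$ fails to be continuous because the underlying $f(z,\theta) = \ind_{[A,\infty)}(y) h(x,\theta)$ has a jump at $y = A$, so \cite[Theorem~2]{St} cannot be invoked directly; this discontinuity is the only real obstacle. The plan is to sandwich $\phi^{(\kappa_j)}_A$ between two smooth automorphic test functions to which Str\"ombergsson's theorem does apply, and then shrink the gap.

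For each small $\epsilon \in (0, A - B_0)$, I would apply the smooth Urysohn lemma (as in~\cite[Lemma~2.1.17]{Muk15}) to build $C^\infty$-functions $\rho_\epsilon^\pm : (0, \infty) \to [0,1]$ with $\rho_\epsilon^- \leq \ind_{[A, \infty)} \leq \rho_\epsilon^+$ and $\supp(\rho_\epsilon^+ - \rho_\epsilon^-) \subset (A - \epsilon, A + \epsilon)$. Replacing the indicator by $\rho_\epsilon^\pm$ in the definition of $\phi^{(\kappa_j)}_A$ yields smooth cuspidal functions $f_\epsilon^\pm(z,\theta) := \rho_\epsilon^\pm(y) h(x, \theta)$ and the corresponding smooth automorphic functions $\phi_\epsilon^\pm(z,\theta) := \sum_{\gamma \in \Gamma_j \backslash \Gamma} f_\epsilon^\pm(\sigma_j^{-1}\gamma(z,\theta))$, each supported in the cusp $\kappa_j$. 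To control the approximation error, I introduce the nonnegative continuous automorphic function
\[ \psi_\epsilon(z,\theta) := \|h\|_\infty \sum_{\gamma \in \Gamma_j \backslash \Gamma} (\rho_\epsilon^+ - \rho_\epsilon^-)\bigl(\im \sigma_j^{-1}\gamma(z,\theta)\bigr), \]
so that $|\phi^{(\kappa_j)}_A - \phi_\epsilon^\pm| \leq \psi_\epsilon$ pointwise; the use of $\|h\|_\infty$ here sidesteps any decomposition of the complex-valued $h$ into signed components.

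Unfolding against the fundamental domain (as in the evaluation of the stated right-hand side) gives $\frac{1}{\mu(\Gamma \backslash G)}\int_{\Gamma \backslash G} \psi_\epsilon\, d\mu = O(\epsilon)$ and $\frac{1}{\mu(\Gamma \backslash G)}\int_{\Gamma \backslash G} \phi_\epsilon^+\, d\mu = \frac{1}{A \mu(\Gamma \backslash G)}\int_0^{2\pi}\int_0^1 h(x,\theta)\, \wrt x\, \wrt \theta + O(\epsilon)$. Since $\phi_\epsilon^+$ and $\psi_\epsilon$ are continuous, \cite[Theorem~2]{St} applies and yields horocycle equidistribution for each, uniformly for $\beta - \alpha \geq y^{1/2 - \delta}$. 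Combining these with the pointwise bound $|\phi^{(\kappa_j)}_A - \phi_\epsilon^+| \leq \psi_\epsilon$ shows
\[ \limsup_{y \to 0} \biggl|\frac{1}{\beta - \alpha}\int_\alpha^\beta \phi^{(\kappa_j)}_A(x+iy,0)~\wrt x - \frac{1}{A\mu(\Gamma \backslash G)}\int_0^{2\pi}\int_0^1 h(x,\theta)~\wrt x~\wrt \theta\biggr| = O(\epsilon). \]
Letting $\epsilon \to 0$ closes the argument. There is no substantive analytic difficulty beyond this smoothing step; \cite[Theorem~2]{St} is used as a black box.
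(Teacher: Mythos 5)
Your proposal is correct and takes essentially the same route as the paper: both use the smooth Urysohn lemma to sandwich the indicator $\ind_{[A,\infty)}$ between compactly supported $C^\infty$ bump functions, build the corresponding smooth automorphic test functions, apply~\cite[Theorem~2]{St} to those, and then shrink the approximation parameter. The one genuine (and in fact slightly cleaner) difference is in how you handle a general complex-valued $h$: the paper's pointwise inequalities $\phi^- \leq \phi_A \leq \phi^+$ require $h \geq 0$, so it first proves the statement for $h \geq 0$ and then recovers the general case by writing $h = (h+M) - M$ and subtracting two applications of the argument. You avoid that detour entirely by dominating $|\phi^{(\kappa_j)}_A - \phi_\epsilon^\pm|$ by a single nonnegative continuous envelope $\psi_\epsilon$ built from $\|h\|_\infty$ and the triangle inequality, applying Str\"ombergsson to $\phi_\epsilon^+$ and to $\psi_\epsilon$ separately. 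Both approaches yield the same uniform $O(\epsilon)$-$\limsup$ bound; yours just eliminates the positivity reduction at no cost.
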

\begin{proof}  Let $A - B_0>\varepsilon>0$ and $\phi_A:=\phi^{(\kappa_j)}_{A,0}$.  Using the smooth Urysohn lemma, define $C^\infty$-functions $\ind^+: (0, \infty) \rightarrow [0,1]$ and $\ind^-: (0, \infty)  \rightarrow [0,1]$ by \begin{align*}  \ind^+(y) &:= \ind_{[A, \infty)}^{\varepsilon, +}(y) := \begin{cases} 1 & \text{ if } y \in [A, \infty) \\ 0 & \text{ if } y \in (0, A- \varepsilon]  \end{cases} \\ \ind^-(y) &:= \ind_{[A, \infty)}^{\varepsilon, -}(y) := \begin{cases} 1 & \text{ if } y \in [A+\varepsilon, \infty) \\ 0 & \text{ if } y \in (0, A]  \end{cases}.
 \end{align*}  (Here, we are {\em not} giving an explicit formula for $\ind^+ \vert_{(A- \varepsilon, A)}$ or for $ \ind^- \vert_{(A, A+ \varepsilon)}$.)  These functions, furthermore, define the $C^\infty$-functions \begin{align*} f^+(z, \theta) &=  \ind^+(y) h(x, \theta)\quad \textrm{ and } \quad \phi^+(z, \theta) = \sum_{\gamma \in \Gamma_\infty \backslash \sigma_j^{-1}\Gamma} f^+(\gamma(z, \theta)) \\
  f^-(z, \theta) &=  \ind^-(y) h(x, \theta)\quad \textrm{ and } \quad \phi^-(z, \theta) = \sum_{\gamma \in \Gamma_\infty \backslash \sigma_j^{-1}\Gamma} f^-(\gamma(z, \theta)).\end{align*}  Recall (\ref{eqn:TestFctDefn},~\ref{eqnDefnFT0}) that \[\phi_A(x+iy, 0)= \sum_{\gamma \in \Gamma_\infty \backslash \sigma_j^{-1}\Gamma} f_{A,0}(\gamma(z, \theta)),\] and, thus, we have that the inequalities $\phi^- \leq \phi_A \leq \phi^+$ hold pointwise.   Let \[M := \max \{|h(x, \theta)| : x \in [0,1] \textrm { and } \theta \in [0, 2\pi]\}.\]

 Let us first give the proof for the case $h\geq 0$.  Let $\epsilon>0$. Choose $\varepsilon$ such that $\frac{2 \pi M \varepsilon}{A - \varepsilon}< \epsilon/2$.  The inequalities imply that \begin{align*} \frac 1 {\beta - \alpha}\int_\alpha^\beta \phi_A(x+iy, 0)~\wrt x &\leq \frac 1 {\beta - \alpha}\int_\alpha^\beta \phi^+(x+iy, 0)~\wrt x  \\ &\leq \frac{1}{A\mu(\Gamma \backslash G)} \left(\int_0^{2\pi}\int_0^1 h(x, \theta)~\wrt x~\wrt\theta + \frac{2 \pi M \varepsilon}{A - \varepsilon}\right) + \frac \epsilon 2 \\
 &\leq \frac{1}{A\mu(\Gamma \backslash G)} \left(\int_0^{2\pi}\int_0^1 h(x, \theta)~\wrt x~\wrt\theta \right)+ \epsilon 
  \end{align*} for $y$ small enough and all $\alpha \leq \beta$ such that $\beta - \alpha\geq y^{1/2 -\delta}$.  Here the second inequality follows from~\cite[Theorem~2]{St}.  Using the analogous proof for $\phi^-$, we obtain \begin{align*} \frac 1 {\beta - \alpha}\int_\alpha^\beta \phi_A(x+iy, 0)~\wrt x  \geq \frac{1}{A\mu(\Gamma \backslash G)} \left(\int_0^{2\pi}\int_0^1 h(x, \theta)~\wrt x~\wrt\theta \right)- \epsilon 
  \end{align*} for $y$ small enough and all $\alpha \leq \beta$ such that $\beta - \alpha\geq y^{1/2 -\delta}$.  Consequently, we have that \[\left| \frac 1 {\beta - \alpha}\int_\alpha^\beta \phi_A(x+iy, 0)~\wrt x -  \frac{1}{A\mu(\Gamma \backslash G)} \left(\int_0^{2\pi}\int_0^1 h(x, \theta)~\wrt x~\wrt\theta \right)\right| < \epsilon\] for $y$ small enough and all $\alpha \leq \beta$ such that $\beta - \alpha\geq y^{1/2 -\delta}$, which proves the desired result when $h \geq 0$.
  
 Since $h + M \geq 0$, applying the above proof with $h$ replaced by $h +M$ yields the desired result for $h+M$.  Applying the above proof with $h$ replaced by $M$ yields the desired result for $M$.  Subtracting these two results yields the desired result for general $h$ and proves the desired result in all cases.
\end{proof}

\section{Proof of Theorem~\ref{thm:TanBunRelEqui}}\label{sec:ProofthmTanBunRelEqui}  In this section, we prove Theorem~\ref{thm:TanBunRelEqui}, which is our non-effective main result and which allows a greater range for the rate of growth of $T$ versus the decay of $Ty$ than our effective main result.  The idea of the proof is as follows.  We will approximate integrating from $\alpha$ to $\beta$ via $C^\infty$-functions constructed from a well-known mollifier.  Using the Fourier series of these approximations, we apply Proposition~\ref{prop:MovingToFixed2} to the low modes of these Fourier series.  Using a classical result for the uniform convergence of the Fourier series for smooth functions, we show the high modes are negligible.  This allows us to apply horocycle equidistribution for fixed functions, namely Lemma~\ref{lemm:HoroDistTanBun}, to obtain the desired result.

\begin{proof}[Proof of Theorem~\ref{thm:TanBunRelEqui}]  Let $\phi_T:=\phi^{(\kappa_j)}_{T,0}$, $\phi^0:= \phi^{0,(\kappa_j)}_0$, and $\alpha < \beta$.  Without loss of generality, we may assume that $0\leq \alpha$, $\beta \leq 1$, and $1/2\geq\delta >0$.  Otherwise, we can break the integral into a finite number of pieces and use the periodicity in $x$.  Let us first assume that either $0\leq \alpha< \beta <1$ or $0< \alpha <\beta\leq1$.  The final case of $\alpha=0$ and $\beta=1$ is a simplification and will be proved at the end.  

Let $\frac 1 2 (\beta-\alpha) > \varepsilon>0$ and \[M := \max \{|h(x, \theta)| : x \in [0,1] \textrm { and } \theta \in [0, 2\pi]\}.\]  Define the indicator function $\chi: \RR \rightarrow [0,1]$ by \[\chi(x):=\chi_{[\alpha, \beta]}(x):=\begin{cases} 1 & \textrm{ if } x \in [\alpha, \beta] \\ 0 & \textrm{ if }  x \notin [\alpha, \beta] \end{cases}.\]  Because we require explicit Lipschitz constants, we will approximate $\chi$ via convolutions with an explicit $C^\infty$-function, namely the well-known mollifier (see~\cite[Example~9.23]{Tah15} for example) $\rho: \RR \rightarrow \RR$ defined by \begin{align}\label{eqn:MolliferRhoDefn}
 \rho(x) := \begin{cases} \ell^{-1}e^{- \frac 1 {1 - x^2}} &\textrm{ if } |x| < 1 \\ 0 & \textrm{ if } |x| \geq 1 \end{cases} \quad \quad \rho_\varepsilon(x) := \frac 1 \varepsilon \rho\left(\frac x \varepsilon\right) \end{align} where \[\ell := \int_{-1}^1 e^{- \frac 1 {1 - x^2}} ~\wrt x.\]  Define $C^\infty$-functions $\psi^+: \RR \rightarrow [0,1]$ and $\psi^-: \RR  \rightarrow [0,1]$ by the convolutions \begin{align*} \psi^+(x) := \psi_{[\alpha, \beta]}^{\varepsilon, +}(x):=\int_{-\infty}^\infty \chi_{[-r_+, r_+]}(x-y) \rho_{\varepsilon/2}(y)~\wrt y \\
\psi^-(x) :=\psi_{[\alpha, \beta]}^{\varepsilon, -}(x):= \int_{-\infty}^\infty \chi_{[-r_-, r_-]}(x-y) \rho_{\varepsilon/2}(y)~\wrt y
 \end{align*} where $r_+:= \frac{\beta-\alpha+\varepsilon} 2$ and $r_-:= \frac{\beta-\alpha-\varepsilon} 2$.  Translating these functions, we obtain our desired upper and lower approximations: \begin{align*}  \chi^+(x) := \chi_{[\alpha, \beta]}^{\varepsilon, +}(x) :=\psi^+\left(x - \frac{\alpha + \beta}{2}\right) \quad \quad \chi^-(x) := \chi_{[\alpha, \beta]}^{\varepsilon, -}(x) := \psi^-\left(x - \frac{\alpha + \beta}{2}\right) \end{align*}   Note that here $ \chi^+$ and $\chi^-$ are both $C^\infty$-functions with $0\leq \chi^+ \leq 1$ and $0\leq \chi^- \leq 1$ such that \begin{align*} &\chi^+ \vert_{[\alpha, \beta]} =1, \quad \quad  \chi^+ \vert_{(-\infty, \alpha - \varepsilon] \cup [\beta + \varepsilon, \infty)  } =0 \\
 &\chi^- \vert_{ [\alpha+ \varepsilon, \beta - \varepsilon] } =1, \quad \quad \chi^- \vert_{(-\infty, \alpha] \cup [\beta, \infty) } =0.
  \end{align*}
  
Now, as $\rho$ is smooth and compactly supported, the bounds $M_\rho(n):= \max \left\{\left|\frac{\wrt{}^n \rho}{\wrt x^n} (x)\right| : x \in \RR\right\}$ are finite for every $n \in \NN$.  For concision, let us define $\frac{\wrt{}^0\chi^+}{\wrt x^0}$ to be $\chi^+$ and $\frac{\wrt{}^0\chi^-}{\wrt x^0}$ to be $\chi^-$.
  
\begin{lemm}\label{lemm:LipschitzConstChi}  Let $n \in \NN \cup \{0\}$.  A Lipschitz constant for $\frac{\wrt{}^n\chi^+}{\wrt x^n}$ is $\left(\frac 2 \varepsilon\right)^{n+2}(\beta - \alpha + \varepsilon)M_\rho(n+1)$ and a Lipschitz constant for $\frac{\wrt{}^n\chi^-}{\wrt x^n}$ is $\left(\frac 2 \varepsilon\right)^{n+2}(\beta - \alpha - \varepsilon)M_\rho(n+1)$.
\end{lemm}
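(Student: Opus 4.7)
The plan is to obtain each Lipschitz constant by bounding the sup-norm of the next derivative and invoking the mean value theorem. Since $\chi^{+}$ and $\chi^{-}$ are translations of $\psi^{+}$ and $\psi^{-}$, which are themselves convolutions of indicator functions of intervals of length $2r_{+} = \beta - \alpha + \varepsilon$ and $2r_{-} = \beta - \alpha - \varepsilon$ with the mollifier $\rho_{\varepsilon/2}$, translation invariance means it suffices to bound $\|(\psi^{\pm})^{(n+1)}\|_{\infty}$.

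First I would differentiate under the convolution: since the indicator is only $L^{1}$ while $\rho_{\varepsilon/2}$ is smooth, all derivatives should be thrown onto the mollifier, giving
\[
\frac{\wrt{}^{n+1}\psi^{\pm}}{\wrt x^{n+1}}(x) \;=\; \int_{-\infty}^{\infty} \chi_{[-r_{\pm}, r_{\pm}]}(x-y)\,\rho_{\varepsilon/2}^{(n+1)}(y)\,\wrt y.
\]
Then Young's inequality in the form $\|f * g\|_{\infty} \leq \|f\|_{1}\|g\|_{\infty}$ produces the bound $\|(\psi^{\pm})^{(n+1)}\|_{\infty} \leq 2r_{\pm}\,\|\rho_{\varepsilon/2}^{(n+1)}\|_{\infty}$.

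Next I would compute $\|\rho_{\varepsilon/2}^{(n+1)}\|_{\infty}$ by the chain rule from the scaling $\rho_{\varepsilon/2}(x) = \tfrac{2}{\varepsilon}\rho(2x/\varepsilon)$, obtaining
\[
\rho_{\varepsilon/2}^{(n+1)}(x) \;=\; \left(\tfrac{2}{\varepsilon}\right)^{n+2}\rho^{(n+1)}(2x/\varepsilon),
\]
so that $\|\rho_{\varepsilon/2}^{(n+1)}\|_{\infty} = (2/\varepsilon)^{n+2} M_{\rho}(n+1)$. Combining with the Young estimate gives
\[
\left\| \frac{\wrt{}^{n+1}\chi^{+}}{\wrt x^{n+1}} \right\|_{\infty} \leq \left(\tfrac{2}{\varepsilon}\right)^{n+2}(\beta-\alpha+\varepsilon)M_{\rho}(n+1),
\]
and the same inequality with $+\varepsilon$ replaced by $-\varepsilon$ for $\chi^{-}$. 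Applying the mean value theorem turns these sup-bounds on the $(n+1)$-st derivative into Lipschitz constants for the $n$-th derivative, which matches the stated bounds.

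There is no serious obstacle here; the only point to be careful about is making sure the scaling factor in $\rho_{\varepsilon/2}$ is tracked correctly (one factor of $2/\varepsilon$ from the prefactor of the mollifier and $n+1$ factors from differentiating $\rho(2x/\varepsilon)$), which is what produces the exponent $n+2$ on $2/\varepsilon$. The case $n=0$ works the same way using the convention $\wrt{}^{0}/\wrt x^{0}$ as the identity.
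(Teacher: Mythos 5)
Your proof is correct and follows essentially the same route as the paper: differentiate under the convolution so all derivatives land on the mollifier, bound $\|\rho_{\varepsilon/2}^{(n+1)}\|_\infty$ by the chain rule to get the $(2/\varepsilon)^{n+2}M_\rho(n+1)$ factor, apply Young's inequality to bring in the $\|\chi_{[-r_\pm,r_\pm]}\|_1 = \beta-\alpha\pm\varepsilon$ factor, and finish with the mean value theorem. Your write-up is slightly more explicit about where the derivative lands and about the scaling computation, but the argument is the same.
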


\begin{proof}
By the mean value theorem, a bound for the derivative will be a Lipschitz constant for the function.  By induction on the chain rule, we have \[\left|\frac{\wrt{}^{n+1} \rho_{\varepsilon/2}}{\wrt x^{n+1}} (x)\right| \leq \left(\frac{2} {\varepsilon}\right)^{n+2}M_\rho(n+1)\] for all $x \in \RR$.  Applying Young's inequality gives \[\left\|\frac{\wrt{}^{n+1} \chi^+}{\wrt x^{n+1}}\right\|_\infty \leq \left\|\chi_{[-r_+, r_+]}\right\|_1 \left\|\frac{\wrt{}^{n+1} \rho_{\varepsilon/2}}{\wrt x^{n+1}} \right\|_\infty \leq \left(\frac{2} {\varepsilon}\right)^{n+2}(\beta - \alpha + \varepsilon)M_\rho(n+1)\] and the desired result.  The proof for $\chi^-$ is analogous.
\end{proof}

\begin{coro} Let $n \in \NN \cup \{0\}$.  The functions \begin{align*}
\omega_{+,n}(t):= & \left(\left(\frac 2 \varepsilon\right)^{n+2}(\beta - \alpha + \varepsilon)M_\rho(n+1)\right) t \\ \omega_{-,n}(t):= &\left(\left(\frac 2 \varepsilon\right)^{n+2}(\beta - \alpha - \varepsilon)M_\rho(n+1)\right)t  \end{align*} are moduli of continuity for $\frac{\wrt{}^n\chi^+}{\wrt x^n}$ and $\frac{\wrt{}^n\chi^-}{\wrt x^n}$, respectively.
\end{coro}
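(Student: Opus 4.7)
The plan is to observe that this corollary is an immediate reformulation of the preceding Lemma~\ref{lemm:LipschitzConstChi}. Recall that a modulus of continuity for a function $g$ is a nondecreasing function $\omega : [0, \infty) \to [0, \infty)$ with $\omega(0) = 0$ such that $|g(x) - g(y)| \leq \omega(|x-y|)$ for all $x, y$ in the domain. In particular, whenever $g$ is $L$-Lipschitz, the linear function $\omega(t) = L t$ is automatically a modulus of continuity.

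Thus the entire proof consists of reading off the Lipschitz constants $L_{+,n} = \left(\tfrac{2}{\varepsilon}\right)^{n+2}(\beta - \alpha + \varepsilon)M_\rho(n+1)$ and $L_{-,n} = \left(\tfrac{2}{\varepsilon}\right)^{n+2}(\beta - \alpha - \varepsilon)M_\rho(n+1)$ provided by Lemma~\ref{lemm:LipschitzConstChi} for $\frac{\wrt{}^n \chi^+}{\wrt x^n}$ and $\frac{\wrt{}^n \chi^-}{\wrt x^n}$ respectively, and setting $\omega_{\pm, n}(t) = L_{\pm,n} \, t$. There is no real obstacle: each function $\omega_{\pm,n}$ is linear in $t$ with positive slope, hence nondecreasing with $\omega_{\pm,n}(0) = 0$, and the Lipschitz bound directly gives the pointwise inequality $|g(x) - g(y)| \leq \omega_{\pm,n}(|x-y|)$ required of a modulus of continuity.

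Since this is only a reformulation, I would write the proof in a single line, simply citing Lemma~\ref{lemm:LipschitzConstChi} and noting that a Lipschitz constant $L$ always yields a linear modulus of continuity $t \mapsto Lt$. The reason to record this statement as a separate corollary is presumably to have the moduli of continuity packaged in the precise form needed later when invoking Jackson's theorem on the rate of uniform convergence of Fourier series of smooth functions (as foreshadowed in Section~\ref{sec:ProofthmTanBunRelEqui}'s outline).
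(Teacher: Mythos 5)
Your proof is correct and matches the paper's, which simply states the corollary is immediate from the definitions: a Lipschitz constant $L$ from Lemma~\ref{lemm:LipschitzConstChi} yields the linear modulus of continuity $t \mapsto Lt$. Your added remark about why this is packaged separately (for Jackson's theorem) is accurate.
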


\begin{proof}
 This is immediate from the definitions.
\end{proof}

\begin{lemm}\label{lemm:bndFourierSmoothCutoff} Let $n \in \NN \cup \{0\}$ and $m \in \ZZ$.  We have that \begin{align*} \left|\widehat{\frac{\wrt{}^n\chi^+}{\wrt x^n}}(m)\right| &\leq 2(\beta-\alpha+\varepsilon)\left(\frac 2 \varepsilon\right)^{n} M_\rho(n)\\
 \left|\widehat{\frac{\wrt{}^n\chi^-}{\wrt x^n}}(m)\right| &\leq 2(\beta-\alpha-\varepsilon)\left(\frac 2 \varepsilon\right)^{n} M_\rho(n)\end{align*} for all small enough $\varepsilon$.
 
\end{lemm}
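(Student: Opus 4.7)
The plan is to bound each Fourier coefficient by the corresponding $L^1$-norm via the trivial inequality $|\widehat{f}(m)|\leq \|f\|_{L^1}$ (valid since $|e(-mx)|=1$), and then to compute these $L^1$-norms using the convolutional structure of $\chi^{\pm}$. Since $\chi^{\pm}$ is just a translate of $\psi^{\pm}:=\chi_{[-r_\pm,r_\pm]} * \rho_{\varepsilon/2}$, and translation multiplies a Fourier coefficient only by a unimodular factor, it suffices to bound $\left\|\frac{\wrt{}^n \psi^{\pm}}{\wrt x^n}\right\|_{L^1(\RR)}$.

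The first step is to throw all derivatives onto the mollifier. Since $\rho_{\varepsilon/2}$ is $C^\infty$ with compact support, differentiation under the convolution yields
\[ \frac{\wrt{}^n \psi^{\pm}}{\wrt x^n} = \chi_{[-r_\pm,r_\pm]} * \rho_{\varepsilon/2}^{(n)}. \]
Young's convolution inequality $\|f*g\|_1 \leq \|f\|_1\|g\|_1$ then gives
\[ \left\|\frac{\wrt{}^n \psi^{\pm}}{\wrt x^n}\right\|_{L^1(\RR)} \leq \|\chi_{[-r_\pm,r_\pm]}\|_1 \,\|\rho_{\varepsilon/2}^{(n)}\|_1 = (\beta-\alpha\pm\varepsilon)\,\|\rho_{\varepsilon/2}^{(n)}\|_1, \]
using $\|\chi_{[-r_\pm,r_\pm]}\|_1 = 2r_\pm = \beta - \alpha \pm \varepsilon$.

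The second step is to compute $\|\rho_{\varepsilon/2}^{(n)}\|_1$ explicitly from the scaling $\rho_{\varepsilon/2}(x) = (2/\varepsilon)\rho(2x/\varepsilon)$. The chain rule gives $\rho_{\varepsilon/2}^{(n)}(x) = (2/\varepsilon)^{n+1}\rho^{(n)}(2x/\varepsilon)$, and the substitution $u=2x/\varepsilon$ reduces the $L^1$-norm to $(2/\varepsilon)^n \|\rho^{(n)}\|_1$. Because $\rho^{(n)}$ is supported in $[-1,1]$ with $\|\rho^{(n)}\|_\infty \leq M_\rho(n)$, this is at most $2(2/\varepsilon)^n M_\rho(n)$. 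Combining with the previous estimate produces exactly the two bounds claimed in the lemma.

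The main point requiring care, rather than a substantive obstacle, is the clause \emph{for all small enough} $\varepsilon$: one needs $\varepsilon$ small enough that the supports of $\chi^{\pm}$ sit inside $[0,1]$, or, in the boundary cases $\alpha=0$ or $\beta=1$, that periodizing them to $\RR/\ZZ$ is unambiguous. Once this is ensured, the Fourier coefficient on $\RR/\ZZ$ is dominated in modulus by the $L^1$-norm over $\RR$ (by Fubini applied to the periodization), and the two-step estimate above closes the argument uniformly in $m\in\ZZ$.
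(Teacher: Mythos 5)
Your argument is correct and follows essentially the same route as the paper's: both proofs shift all $n$ derivatives onto the mollifier factor $\rho_{\varepsilon/2}$, isolate the $\chi_{[-r_\pm,r_\pm]}$ contribution as the $L^1$-mass $2r_\pm=\beta-\alpha\pm\varepsilon$, and bound the $\rho_{\varepsilon/2}^{(n)}$ contribution by $2(2/\varepsilon)^n M_\rho(n)$ after the scaling substitution. The only cosmetic difference is that you package the separation via $|\widehat{f}(m)|\le\|f\|_{L^1}$ plus Young's inequality, whereas the paper invokes the multiplicativity $\widehat{f*g}=\widehat{f}\,\widehat{g}$ and then bounds the mollifier's Fourier transform pointwise; these yield identical bounds.
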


\begin{proof}
 Since the supports of $\frac{\wrt{}^n\chi^+}{\wrt x^n}$ and $\frac{\wrt{}^n\chi^-}{\wrt x^n}$ are contained in $[0,1]$ (perhaps after applying the periodicity in $x$ and, because $\varepsilon$ is small enough, there are no overlaps), we can replace the Fourier transform over the circle by that over the line.  By elementary properties of convolutions and Fourier analysis, it suffices to give a bound for \begin{align*}\left|\int_{-\infty}^\infty\frac{\wrt{}^{n} \rho_{\varepsilon/2}}{\wrt x^{n}} (x) e(-mx)~\wrt x\right| &= \left|{\left(\frac 2 \varepsilon\right)^{n+1}}\int_{-\infty}^\infty\frac{\wrt{}^{n} \rho}{\wrt x^{n}} \left(\frac{2x}{\varepsilon}\right) e(-mx)~\wrt x\right|\\
 &= \left|{\left(\frac 2 \varepsilon\right)^{n+1}}\int_{-\varepsilon/2}^{\varepsilon/2}\frac{\wrt{}^{n} \rho}{\wrt x^{n}} \left(\frac{2x}{\varepsilon}\right) e(-mx)~\wrt x\right| \\
 &= \left|{\left(\frac 2 \varepsilon\right)^{n}}\int_{-1}^{1}\frac{\wrt{}^{n} \rho}{\wrt x^{n}} \left(x\right) e(-\varepsilon mx/2)~\wrt x\right|\\
 &\leq 2\left(\frac 2 \varepsilon\right)^{n} M_\rho(n).\end{align*}  The first equality follows from induction on the chain rule, the second equality follows from the fact that the supports of all of the derivatives of $\rho$ are contained in the support of $\rho$, namely the closed interval $[-1,1]$, and the third equality follows from changing variables $\frac{2x} {\varepsilon} \mapsto x$.  The desired result now follows. \end{proof}

 By smoothness and periodicity, we have \begin{align*}\chi^+(x)   = \sum_{m \in \ZZ} \widehat{\chi^+}(m) e(m x) \quad \quad \chi^-(x)   = \sum_{m \in \ZZ} \widehat{\chi^-}(m) e(m x) \end{align*} where $\widehat{\chi^+}(m)$ and $\widehat{\chi^-}(m)$ are the Fourier transforms on $\RR / \ZZ$ of $\chi^+(x)$ and $\chi^-(x)$, respectively.  Moreover, the convergence of each sum to each function is uniform, and, we have, by a classical theorem due to Jackson~\cite[Page~21]{Jac30} \begin{align*} &
 \left|\chi^+(x) - \sum_{m =-N}^N \widehat{\chi^+}(m) e(m x) \right| \leq  K_0 \omega_{+,n}(2 \pi/N) \frac{\log N}{N^n}= \frac { K_1(\beta - \alpha + \varepsilon)\log N} {\varepsilon^{n+2}N^{n+1}} \\ &\left|\chi^-(x) - \sum_{m =-N}^N \widehat{\chi^-}(m) e(m x) \right| \leq   K_0 \omega_{-,n}(2 \pi/N) \frac{\log N}{N^n}=\frac { K_1(\beta - \alpha - \varepsilon)\log N} {\varepsilon^{n+2}N^{n+1}}\end{align*} where $K_0>0$ is a fixed constant independent of $x$, $N$, $n$, $\varepsilon$, $\rho$, $\alpha$, and $\beta$ and where $K_1:= \pi 2^{n+3}  K_0 M_\rho(n+1)$.  Note that both bounds are uniform in $x$.
 
 Let us first give the proof for the case $h\geq 0$.  In particular, this implies that $\phi_T \geq 0$ pointwise, from which it follows that $\phi_T \chi^- \leq \phi_T\chi \leq  \phi_T\chi^+$ pointwise.   Let \[n:= \frac 2 \delta \quad \quad \varepsilon:= \frac 1 8 \max\left\{T^{-1/6},\left(\frac{Ty}{B_1}\right)^{1/2}\right\}\left(\frac{Ty}{B_1}\right)^{-\delta/2} \quad \quad \epsilon>0 \quad \quad N:=\left\lfloor \left(\frac{Ty}{B_1}\right)^{-1/2 + \delta/4}\right\rfloor.\]  We have that \begin{align}\label{eqn:UnitTanBunUppBnd}
 \frac T {\beta - \alpha} & \int_\alpha^\beta \phi_T(x+iy, 0)~\wrt x = \frac T {\beta - \alpha} \int_0^1 \phi_T(x+iy, 0)\chi(x)~\wrt x \\\nonumber&\leq  \frac T {\beta - \alpha} \sum_{m =-N}^N \widehat{\chi^+}(m)\int_0^1 \phi_T(x+iy, 0)e(mx)~\wrt x  \\\nonumber &\quad \quad +   \frac T {\beta - \alpha}\frac { K_1(\beta - \alpha + \varepsilon)\log N} {\varepsilon^{n+2}N^{n+1}} \int_0^1 \phi_T(x+iy, 0)~\wrt x.\end{align}
 
 Applying Proposition~\ref{prop:MovingToFixed2}, we obtain, as $T \rightarrow \infty$ and $Ty \rightarrow 0$, \begin{align} \label{eqn:ConstModeTransfer} TK_1 &\int_0^1 \phi_T(x+iy, 0)~\wrt x \\\nonumber &= B_1K_1\left(1 + O\left(\frac{Ty}{B_1}\right)^{\delta} \right) \int_0^1 \phi^0\left(x+i\frac{T}{B_1}y, 0\right)~\wrt x +O\left(\frac {K_1}{\sqrt{T}}\right).
  \end{align}  Now, by Lemma~\ref{lemm:HoroDistTanBun} , we have that, as $Ty \rightarrow 0$, \begin{align*}
B_1 K_1 &\int_0^1 \phi^0\left(x+i\frac{T}{B_1}y, 0\right)~\wrt x \rightarrow  \frac {K_1} {\mu(\Gamma \backslash G)} \int_0^{2\pi}\int_0^1 h(x, \theta)~\wrt x~\wrt\theta \leq2 \pi M \frac {K_1} {\mu(\Gamma \backslash G)}.\end{align*}  Moreover, as $\beta -\alpha \geq \max\left\{T^{-1/6},\left(\frac{Ty}{B_1}\right)^{1/2}\right\}\left(\frac{Ty}{B_1}\right)^{-\delta}$, we have \begin{align}\label{eqn:EpsilonLessThanWhole}
0 \leq \frac{ \varepsilon}  {\beta - \alpha}\leq \frac 1 8 \left(\frac{Ty}{B_1}\right)^{\delta/2},
\end{align} and we also have that  \begin{align*} &0\leq \frac 1{\varepsilon^{n+2}N^{n+1-\delta/2}}\leq 2^{2/\delta+1 - \delta/2} 8^{2/\delta+2} \left(\frac{Ty}{B_1}\right)^{\delta/2 + \delta^2/8}.  \end{align*}  Consequently, for all $T$ large enough and all $Ty$ small enough, we have that \begin{align} \label{eqn:ConstModeBnd} \frac T {\beta - \alpha}\frac { K_1(\beta - \alpha + \varepsilon)\log N} {\varepsilon^{n+2}N^{n+1}} \int_0^1 \phi_T(x+iy, 0)~\wrt x \leq \frac{\epsilon} {4}.
  \end{align}

 Another application of Proposition~\ref{prop:MovingToFixed2} yields, as $T \rightarrow \infty$ and $Ty \rightarrow 0$, \begin{align*}  &\frac T {\beta - \alpha} \sum_{m =-N}^N \widehat{\chi^+}(m)\int_0^1 \phi_T(x+iy, 0)e(mx)~\wrt x \\ & \ = \frac{B_1}{\beta-\alpha} \left(1 + O\left(\frac{Ty}{B_1}\right)^{\delta} \right)\sum_{m =-N}^N \widehat{\chi^+}(m)\int_0^1 \phi^0\left(x+i\frac{T}{B_1}y, 0\right)e(mx)~\wrt x +\sum_{m =-N}^NO\left(\frac {\widehat{\chi^+}(m)}{(\beta-\alpha)\sqrt{T}}\right)  \\
 & \ \leq \frac{B_1}{\beta-\alpha} \left(1 + O\left(\frac{Ty}{B_1}\right)^{\delta} \right)\int_0^1 \phi^0\left(x+i\frac{T}{B_1}y, 0\right)\chi^+(x)~\wrt x \\
& \quad + \frac{B_1}{\beta-\alpha}\left(1 + O\left(\frac{Ty}{B_1}\right)^{\delta} \right) \frac { K_1(\beta - \alpha + \varepsilon)\log N} {\varepsilon^{n+2}N^{n+1}}\int_0^1 \phi^0\left(x+i\frac{T}{B_1}y, 0\right)~\wrt x  +\sum_{m =-N}^NO\left(\frac {\left|\widehat{\chi^+}(m)\right|}{(\beta-\alpha)\sqrt{T}}\right)\end{align*}   Note, as $h \geq 0$, we have that $\phi^0\geq0$ pointwise.  By (\ref{eqn:ConstModeTransfer}, \ref{eqn:ConstModeBnd}), we have, for all $T$ large enough and all $Ty$ small enough, that \begin{align}\label{eqn:ConstModeBnd2} \frac{B_1}{\beta-\alpha}  \left(1 + O\left(\frac{Ty}{B_1}\right)^{\delta} \right)\frac { K_1(\beta - \alpha + \varepsilon)\log N} {\varepsilon^{n+2}N^{n+1}}\int_0^1 \phi^0\left(x+i\frac{T}{B_1}y, 0\right)~\wrt x  \leq \frac{\epsilon} {4}.
  \end{align} 
  
  Now, by Lemma~\ref{lemm:bndFourierSmoothCutoff}, we have that \begin{align}\label{eqn:ErrorTermSqrt}\sum_{m =-N}^NO\left(\frac {\left|\widehat{\chi^+}(m)\right|}{(\beta-\alpha)\sqrt{T}}\right) &\leq O\left(\frac {1}{(\beta-\alpha)\sqrt{T}}\right) \left(\sum_{m \in \ZZ \backslash \{0\}}\frac{\left|\widehat{\frac{\wrt{}^2\chi^+}{\wrt x^2}}(m)\right|}{4\pi^2m^2} + \int_0^1 \chi^+(x)~\wrt x \right) \\\nonumber &= O\left(\frac {1}{\varepsilon^2(\beta-\alpha)\sqrt{T}}\right) \leq O\left(\left(\frac{Ty}{B_1}\right)^{2\delta} \right).
  \end{align}  Note that the implied constant depends on $h$, $\kappa_1$, and $\kappa_j$.  Consequently, for all $T$ large enough and all $Ty$ small enough, we have that \begin{align*}\sum_{m =-N}^NO\left(\frac {\left|\widehat{\chi^+}(m)\right|}{(\beta-\alpha)\sqrt{T}}\right) \leq \frac{\epsilon} {4}.\end{align*}
  
  Finally, to obtain an upper bound for the left-hand side of (\ref{eqn:UnitTanBunUppBnd}), we must give an upper bound for \[D:=\frac{B_1}{\beta-\alpha} \left(1 + O\left(\frac{Ty}{B_1}\right)^{\delta} \right)\int_0^1 \phi^0\left(x+i\frac{T}{B_1}y, 0\right)\chi^+(x) ~\wrt x.\]  Since $\phi^0\geq0$ pointwise, we have that \begin{align}\label{eqn:DUpperBnd} D \leq  \left(1 + \frac{ 2 \varepsilon}{\beta-\alpha}\right)\frac{B_1}{\beta-\alpha+ 2 \varepsilon} \left(1 + O\left(\frac{Ty}{B_1}\right)^{\delta} \right)\int_{\alpha - \varepsilon}^{\beta + \varepsilon} \phi^0\left(x+i\frac{T}{B_1}y, 0\right)~\wrt x.
 \end{align}
 Applying (\ref{eqn:EpsilonLessThanWhole}) and Lemma~\ref{lemm:HoroDistTanBun}, we have, for all $T$ large enough and $Ty$ small enough, that\begin{align} \label{eqn:DUpperBnd2}  D \leq    \frac {1} {\mu(\Gamma \backslash G)} \int_0^{2\pi}\int_0^1 h(x, \theta)~\wrt x~\wrt\theta + \frac{\epsilon} 4.\end{align}  Note that Lemma~\ref{lemm:HoroDistTanBun} applies because $\beta-\alpha+ 2 \varepsilon \geq \beta -\alpha \geq \left(\frac{Ty}{B_1}\right)^{1/2 -\delta}$ holds.  Consequently, we can conclude that \begin{align*} \frac T {\beta - \alpha} & \int_\alpha^\beta \phi_T(x+iy, 0)~\wrt x \leq  \frac {1} {\mu(\Gamma \backslash G)} \int_0^{2\pi}\int_0^1 h(x, \theta)~\wrt x~\wrt\theta +\epsilon.
  \end{align*}
   
 Using the analogous proof for $\chi^-$, we obtain that\begin{align*} \frac T {\beta - \alpha} & \int_\alpha^\beta \phi_T(x+iy, 0)~\wrt x \geq  \frac {1} {\mu(\Gamma \backslash G)} \int_0^{2\pi}\int_0^1 h(x, \theta)~\wrt x~\wrt\theta -\epsilon,
  \end{align*} where we are able to apply Lemma~\ref{lemm:HoroDistTanBun} to compute the main term in the analogous proof because \begin{align*}
\beta - \alpha - 2 \varepsilon &\geq  \max\left\{T^{-1/6},\left(\frac{Ty}{B_1}\right)^{1/2}\right\}\left(\frac{Ty}{B_1}\right)^{-\delta} - \frac 1 4 \max\left\{T^{-1/6},\left(\frac{Ty}{B_1}\right)^{1/2}\right\}\left(\frac{Ty}{B_1}\right)^{-\delta/2} \\ &\geq \ \max\left\{T^{-1/6},\left(\frac{Ty}{B_1}\right)^{1/2}\right\}\left(\frac{Ty}{B_1}\right)^{-\delta/2}  \end{align*} for $Ty$ small enough.  This yields the desired result for the case $h\geq0$.
  
Since $h + M \geq 0$, applying the above proof with $h$ replaced by $h +M$ yields the desired result for $h+M$.  Applying the above proof with $h$ replaced by $M$ yields the desired result for $M$.  Subtracting these two results yields the desired result for general $h$.

The final case to consider is when $\alpha=0$ and $\beta=1$ (namely, the case of closed horocycles).  This final case follows by applying Proposition~\ref{prop:MovingToFixed2} (with $m=0$) and Lemma~\ref{lemm:HoroDistTanBun}.  This proves the desired result in all cases.
\end{proof}

\section{Proof of Theorem~\ref{thm:EffectTanBunRelEqui}}\label{sec:ProofthmEffectTanBunRelEqui}  In this section, we prove Theorem~\ref{thm:EffectTanBunRelEqui}, which is our effective main result.  The proof is analogous to that of Theorem~\ref{thm:TanBunRelEqui} except that we replace Lemma~\ref{lemm:HoroDistTanBun} with its effective version, Lemma~\ref{lemm:EffectHoroDistTanBun}, and keep track of the error terms.  For $\eta =0$, there is an additional step of obtaining the correct main term.  Before we give the details, let us first note that \[\max\left(T^4 \left(\frac{Ty}{B_1}\right)^{1/2}, T^{-1/6}\right)\left(\frac{Ty}{B_1}\right)^{-\delta}\] can decay provided the relative rate of decay of $y$ to the growth of $T$ is constrained.  For example, if we substitute $y =T^{-10}$ and $\delta = \frac 1 {60}$ into this expression, then we obtain $T^{-1/60} B_1^{1/60}$, which decays as $T \rightarrow \infty$.  Also note that, as this expression is larger than or equal to the analogous expression for Theorem~\ref{thm:TanBunRelEqui}, there is no hinderance to applying the proof of Theorem~\ref{thm:TanBunRelEqui} with the noted changes.

\begin{proof}[Proof of Theorem~\ref{thm:EffectTanBunRelEqui}] Let $0< |\widetilde{\eta}| \leq \min\left(\frac{B_1-B_0}{2}, \frac 1 4 \right)$, $\phi_T:=\phi^{(\kappa_j)}_{T,\eta},$ and $\phi^0:= \phi^{0,(\kappa_j)}_{\widetilde{\eta}}$.  We follow the proof of Theorem~\ref{thm:TanBunRelEqui}.  Let \begin{align*}
Q &:= \frac {T}{\mu(\Gamma \backslash G)} \int_0^{2\pi} \int_0^1 \int_0^\infty \ind_{[T, \infty), \widetilde{\eta}}(y) h(x, \theta) \frac{\wrt y \wrt x \wrt \theta}{y^2} 
\\ R &:= R(\widetilde{\eta}):=O\left(T^4 |\widetilde{\eta}|^{-4}\right) \left( \left(\frac{Ty}{B_1}\right)^{1/2} \log^2\left(\frac{B_1}{Ty}\right) + \left(\frac{Ty}{B_1}\right)^{1-s_1}\right)
\\ \widetilde{R}&:=\widetilde{R}(\widetilde{\eta}):=O\left(T^4 |\widetilde{\eta}|^{-4}\right) \\ & \quad\quad\quad\quad\times \left( \left(\frac{Ty}{(\beta-\alpha)^2B_1}\right)^{1/2} \log^2\left(\frac{(\beta-\alpha)B_1}{Ty}\right) + \left(\frac{Ty}{(\beta-\alpha)^2B_1}\right)^{1-s'_1} + \left(\frac{Ty}{(\beta-\alpha)B_1}\right)^{1-s_1}\right).\end{align*}  Note that $|Q| \leq \frac{8\pi M}{3\mu(\Gamma \backslash G)}$.  By applying Lemma~\ref{lemm:EffectHoroDistTanBun} in place of Lemma~\ref{lemm:HoroDistTanBun}, we replace (\ref{eqn:ConstModeBnd}) with the following:  \begin{align}\label{eqn:EffectConstModeBnd} \frac T {\beta - \alpha}&\frac { K_1(\beta - \alpha + \varepsilon)\log N} {\varepsilon^{n+2}N^{n+1}} \int_0^1 \phi_T(x+iy, 0)~\wrt x  
\\\nonumber &\leq 2^{2/\delta} 8^{2/\delta}K_1\left(\frac{Ty}{B_1}\right)^{\delta/2 + \delta^2/8} \left(1+ \frac 1 8 \left(\frac{Ty} {B_1}\right)^{\delta/2}\right)\left(\left(1+O\left(\frac{Ty}{B_1}\right)^{\delta} \right)(Q+R) +O\left(\frac{1}{\sqrt{T}} \right)\right).
  \end{align}  Note that (\ref{eqn:EffectConstModeBnd}) holds when $\frac{\log N}{N^{\delta/2}} \leq 128$, a condition which, for the given $\delta>0,$ we can ensure for all $Ty$ small enough.  Also, the left-hand side of (\ref{eqn:ConstModeBnd2}) is bounded by the right-hand side of (\ref{eqn:EffectConstModeBnd}).  Moreover, (\ref{eqn:ErrorTermSqrt}) holds, giving an additional error term of $O\left(\frac{Ty}{B_1}\right)^{2\delta}$.

  Finally, we compute $D$, which will yield the main term (and additional error terms).  By applying Lemma~\ref{lemm:EffectHoroDistTanBun} in place of Lemma~\ref{lemm:HoroDistTanBun} to (\ref{eqn:DUpperBnd}), we obtain the analog of (\ref{eqn:DUpperBnd2}):  \begin{align*}  D \leq \left(1 + O\left(\frac{Ty}{B_1} \right)^{\delta/2}\right) \left(Q + \widetilde{R} \right).
  \end{align*}  Note that $\varepsilon \leq \frac 1 8 (\beta-\alpha)$.  Thus, the main term is $Q$ and the error terms that are significant coming from this expression are $O(|Q|)\left(\frac{Ty}{B_1} \right)^{\delta/2} = O\left(\frac{Ty}{B_1} \right)^{\delta/2}$ and $\widetilde{R}$.\footnote{Theorem~\ref{thm:EffectTanBunRelEqui} provides a meaningful result only when the term $\widetilde{R}$ decays.}  Here, the implied constant depends on $h$, $\kappa_1,$ and $\kappa_j$.  Note that the error term coming from (\ref{eqn:ErrorTermSqrt}) is negligible compared to these terms.  Similarly, for $Ty$ small enough, the bound coming from the right-hand side of (\ref{eqn:EffectConstModeBnd}) is negligible. 
  
  Consequently, we have that \[ \frac T {\beta - \alpha} \int_\alpha^\beta \phi_T(x+iy, 0)~\wrt x \leq Q	+  \widetilde{R} + O\left(\frac{Ty}{B_1} \right)^{\delta/2}\] for $T \rightarrow \infty$ and $Ty \rightarrow 0$.  Giving the analogous proof for $\chi^-$, yields the reverse inequality and, thus, equality for $h \geq 0$:  \begin{align}\label{eqn:MainEffectEquality} \frac T {\beta - \alpha} \int_\alpha^\beta \phi_T(x+iy, 0)~\wrt x = Q	+  \widetilde{R} + O\left(\frac{Ty}{B_1} \right)^{\delta/2}.
  \end{align}  Here, the implied constant depends on $h$, $\kappa_1,$ and $\kappa_j$.  In the analogous way as in the proof of Theorem~\ref{thm:TanBunRelEqui}, we obtain (\ref{eqn:MainEffectEquality}) for general $h$.
  
Similar to the proof of Theorem~\ref{thm:TanBunRelEqui}, it remains to show the analog of (\ref{eqn:MainEffectEquality}) for the case of $\alpha=0$ and $\beta=1$.  By applying Lemma~\ref{lemm:EffectHoroDistTanBun} in place of Lemma~\ref{lemm:HoroDistTanBun} in the proof of Theorem~\ref{thm:TanBunRelEqui}, we obtain \begin{align}\label{eqn:SecondEffectEquality} T \int_0^1 \phi_T(x+iy, 0)~\wrt x = Q	+  R + O\left(\left(\frac{Ty}{B_1} \right)^{\delta} +\frac 1{\sqrt{T}}\right).
  \end{align}  Here, the implied constant depends on $h$, $\kappa_1,$ and $\kappa_j$.  Now, for $\eta \neq 0$, set $\widetilde{\eta} = \eta$ to obtain the desired result.  
  
 Finally, for $\eta =0$, we can pick any $0< |\widetilde{\eta}| \leq \min\left(\frac{B_1-B_0}{2}, \frac 1 4 \right)$.  Let us first assume that $\widetilde{\eta}>0$.  Note that \begin{align*}
Q &= \frac{T}{\mu(\Gamma\backslash G)}  \int_0^{2\pi} \int_0^1 \int_T^\infty  h(x, \theta) \frac{\wrt y~\wrt x~\wrt \theta}{y^2} \\ & \quad\quad + \frac{T}{\mu(\Gamma\backslash G)} \int_0^{2\pi} \int_0^1 \int_{T-\widetilde{\eta}}^T  \ind_{[T, \infty), \widetilde{\eta}}(y) h(x, \theta) \frac{\wrt y~\wrt x~\wrt \theta}{y^2}
\\ &= \frac{1}{\mu(\Gamma\backslash G)} \int_0^{2\pi} \int_0^1  h(x, \theta)~\wrt x~\wrt \theta + O\left(\frac{|\widetilde{\eta}|}{T} \right). 
  \end{align*}  The implied constant depends on $h$ and $\Gamma$.  A similar proof for $\widetilde{\eta}<0$ yields the same result.  This obtains the desired result and concludes the proof of the theorem.  \end{proof}
  
\section{Proof of Theorem~\ref{thm:EscNeighDepOnConst}}\label{sec:EscNeighDepOnConst}  This proof is a simplification of the proof of Theorem~\ref{thm:TanBunRelEqui}.  For the convenience of the reader, we now give the details.  

\begin{proof}[Proof of Theorem~\ref{thm:EscNeighDepOnConst}]  Let $\phi_T:=\phi^{(\kappa_j)}_{T,\eta}$, $\phi^0:= \phi^{0,(\kappa_j)}_{\widetilde{\eta}}$, and $\alpha < \beta$.  Without loss of generality, we may assume that $0\leq \alpha$ and $\beta \leq 1$.  Otherwise, we can break the integral into a finite number of pieces and use the periodicity in $x$.  If $\alpha=0$ and $\beta=1$, then apply Corollary~\ref{coro:MovingToFixed} to obtain the desired result.  Otherwise, let us assume that either $0\leq \alpha< \beta <1$ or $0< \alpha <\beta\leq1$.  Define $\chi$, $\chi^{\pm}$, $M$, and $M_\rho(n)$ as in the proof of Theorem~\ref{thm:TanBunRelEqui}.

Now set $\varepsilon:=\frac 1 4 T^{-3/4+\delta/2}$.  Let us first consider the case $h\geq0$.  We have, for all $T$ sufficiently large, that\begin{align*} \int_\alpha^\beta \phi_T &(x+iy, 0)~\wrt x  \leq \int_0^1 \phi_T(x+iy, 0) \chi^+(x)~\wrt x  \\ & \leq \int_0^1  \varphi_T(x+iy, 0)\chi^+(x)~\wrt x + O\left(\frac{1}{T^{3/2}}\right) \left(\sum_{m \in \ZZ \backslash \{0\}}\frac{\left|\widehat{\frac{\wrt{}^2\chi^+}{\wrt x^2}}(m)\right|}{4\pi^2m^2} + \int_0^1 \chi^+(x)~\wrt x \right)
\\ & \leq \int_\alpha^\beta  \varphi_T(x+iy, 0)~\wrt x +2 \varepsilon M + O\left(\frac{1}{\varepsilon^2T^{3/2}}\right)
\\ & \leq \int_\alpha^\beta  \varphi_T(x+iy, 0)~\wrt x + O\left(\frac{1}{T^{\delta}}\right)\end{align*} where the implied constant depends on $h$, $\kappa_1$, and $\kappa_j$. The second inequality follows an application of Corollary~\ref{coro:MovingToFixed} and that $\chi^+$ is smooth, and the third inequality comes from Lemma~\ref{lemm:bndFourierSmoothCutoff}.  The analogous proof using $\chi^-$ in place of $\chi^+$ allows us to obtain the desired lower bound.  Consequently, we have that \begin{align*} \int_\alpha^\beta \phi_T &(x+iy, 0)~\wrt x = \int_\alpha^\beta  \varphi_T(x+iy, 0)~\wrt x + O\left(\frac{1}{T^{\delta}}\right)
  \end{align*} where the implied constant depends on $h$, $\kappa_1$, and $\kappa_j$.

Since $h + M \geq 0$, applying the above proof with $h$ replaced by $h +M$ yields the desired result for $h+M$.  Applying the above proof with $h$ replaced by $M$ yields the desired result for $M$.  Subtracting these two results yields the desired result for general $h$ and proves the desired result in all cases.  \end{proof}

\end{document}